\documentclass[a4paper,11pt]{amsart}

\usepackage[T1]{fontenc}

\usepackage{lmodern}

\usepackage[USenglish]{babel}
\frenchspacing

\usepackage{geometry}

\usepackage[abbrev]{amsrefs}

\usepackage{amssymb}
\usepackage{amsmath}
\usepackage{amsthm}
\usepackage{paralist}

\allowdisplaybreaks

\usepackage{esint}

\newtheorem{theorem}{Theorem}[section]
\newtheorem{lemma}[theorem]{Lemma}
\newtheorem{proposition}[theorem]{Proposition}
\newtheorem{corollary}[theorem]{Corollary}

\newtheorem{definition}[theorem]{Definition}

\newcommand{\N}{\mathbb N}

\newcommand{\R}{\mathbb R}

\newcommand{\mbf}{\mathbf}
\newcommand{\mcal}{\mathcal}

\newcommand{\mrm}{\mathrm}

\renewcommand{\a}{\alpha}

\newcommand{\g}{\gamma}
\newcommand{\G}{\Gamma}
\renewcommand{\d}{\delta}

\newcommand{\e}{\varepsilon}

\renewcommand{\t}{\theta}

\newcommand{\s}{\sigma}

\newcommand{\n}{\nabla}
\newcommand{\fa}{\forall}

\newcommand{\es}{\emptyset}
\newcommand{\wk}{\rightharpoonup}

\newcommand{\us}{\underset}

\newcommand{\bs}{\backslash}
\newcommand{\sm}{\setminus}

\newcommand{\sub}{\subset}

\newcommand{\x}{\times}

\newcommand{\cd}{\cdot}

\newcommand{\ts}{\textstyle}

\renewcommand{\l}{\left}
\renewcommand{\r}{\right}

\newcommand{\bthm}{\begin{theorem}}
\newcommand{\ethm}{\end{theorem}}
\newcommand{\blem}{\begin{lemma}}
\newcommand{\elem}{\end{lemma}}
\newcommand{\bprop}{\begin{proposition}}
\newcommand{\eprop}{\end{proposition}}
\newcommand{\bcor}{\begin{corollary}}
\newcommand{\eor}{\end{corollary}}
\newcommand{\bdefi}{\begin{definition}}
\newcommand{\edefi}{\end{definition}}
\newcommand{\bpf}{\begin{proof}}
\newcommand{\epf}{\end{proof}}
\newcommand{\bl}{\begin{array}{l}}
\newcommand{\bll}{\begin{array}{ll}}
\newcommand{\barr}{\begin{array}}
\newcommand{\earr}{\end{array}}
\newcommand{\bite}{\begin{itemize}}
\newcommand{\eite}{\end{itemize}}
\newcommand{\beq}{\begin{equation}}
\newcommand{\eeq}{\end{equation}}
\newcommand{\beqa}{\begin{eqnarray}}
\newcommand{\eeqa}{\end{eqnarray}}
\newcommand{\beqy}{\begin{eqnarray*}}
\newcommand{\eeqy}{\end{eqnarray*}}

\newcommand{\abs}[1]{\lvert #1 \rvert}
\newcommand{\st}{\;\vert\;}
\newcommand{\dif}{\;\mathrm{d}}

\begin{document}


\title[Groundstates for a class of nonlinear Choquard equations in the plane]%
{Existence of groundstates for a class of nonlinear Choquard equations in the plane}

\author{Luca Battaglia}
\address{Sapienza Universit\`a di Roma\\
Dipartimento di Matematica\\
Piazzale Aldo Moro 5\\
00185 Rome\\
Italy}
\email{battaglia@mat.uniroma1.it}

\author{Jean Van Schaftingen}
\address{Universit\'e catholique de Louvain\\ 
Institut de Recherche en Math\'ematique et Physique\\
Chemin du Cyclotron 2 bte L70.01.01\\
1348 Louvain-la-Neuve\\
Belgium}

\email{Jean.VanSchaftingen@uclouvain.be}

\thanks{This work was supported by the Projet de Recherche (Fonds de la Recherche Scientifique--FNRS) T.1110.14 ``Existence and asymptotic behavior of solutions to systems of semilinear elliptic partial differential equations''.}

\date{December 7, 2016}

\subjclass[2010]{
35J91 (%
35J20
)
}

\begin{abstract}
We prove the existence of a nontrivial groundstate solution for the class of nonlinear Choquard equation
$$
-\Delta u+u=\big(I_\alpha*F(u)\big)F'(u)\qquad \text{in }\mathbb{R}^2,
$$
where $I_\alpha$ is the Riesz potential of order $\alpha$ on the plane $\mathbb{R}^2$ under general nontriviality, growth and subcriticality  on the nonlinearity $F \in C^1 (\mathbb{R},\mathbb{R})$.
\end{abstract}

\maketitle

\section{Introduction}

We are interested in the existence of nontrivial solutions to the class of nonlinear Choquard equations of the form 
\begin{equation}\label{p}
-\Delta u+u=\bigl(I_\alpha*F(u)\bigr)F'(u)\quad\quad\quad\text{in }\R^N\tag{$\mathcal{P}$},
\end{equation}
where \(N \in \N = \{1, 2, \dotsc\}\), \(\Delta\) is the standard Laplacian operator on the Euclidean space \(\R^N\), $I_\alpha:\R^N \to\R$ is the Riesz potential of order  $\alpha \in(0,N)$ defined for each \(x \in \R^N \setminus \{0\}\) by
\[
I_\alpha(x)=\frac{\Gamma \bigl(\frac{N-\alpha}2\bigr)}{\Gamma \bigl(\frac{\alpha}2\bigr)\pi^\frac{N}22^\alpha |x|^{N-\alpha}},
\]
and a nonlinearity is described by the function $F\in C^1 (\R,\R)$.
Solutions of the equation \eqref{p} are at least formally critical points of the energy functional defined for a function \(u : \R^N \to \R\) by
\begin{equation}\label{iu}
\mathcal{I}(u)=\frac{1}{2} \int_{\R^N}\bigl(|\nabla u|^2+|u|^2\bigr)-\frac{1}{2} \int_{\R^N}\bigl(I_\alpha*F(u)\bigr)F(u).
\end{equation}

In the particular case where for each \(s \in \R\), \(F (s) = s^2/2\), solutions to the Choquard equation \eqref{p} are standing waves solutions of the Hartree equation.
In particular when \(N = 3\) and \(\alpha = 2\), the problem \eqref{p} has arisen in various fields of physics: quantum mechanics \cite{pek}, one-component plasma \cite{lie} and self-gravitating matter \cite{mpt}. 
In these cases, many existence results have been obtained in literature, with both variational \cites{lie,lio,men} and ordinary differential equations techniques \cites{tm,mpt,csv} (see also the review \cite{mv17}).
Such methods extend also to the case of homogeneous nonlinearities \cite{mv13}.

When the nonlinearity \(F\) is not any more homogeneous, it has been shown that the Choquard equation \eqref{p}
has a nontrivial solution if the nonlinearity $F$ satisfies the following hypotheses \cite{mv15}:
\bite
\item[$(F_0')$] there exists $s_0\in\R$ such that $F(s_0)\ne0$;
\item[$(F_1')$] there exists $C>0$ such that $|F'(s)|\le C\bigl(|s|^\frac{\alpha}N+|s|^\frac{\alpha+2}{N-2}\bigr)$ for every $s>0$;
\item[$(F_2')$] $\lim_{s \to 0} {F(s)}/{|s|^{1+\frac{\alpha}N}}=0=\lim_{s \to 0}{F(s)}/{|s|^\frac{N+\alpha}{N-2}}$.
\end{itemize}
The solution \(u\) is a \emph{groundstate}, in the sense that \(u\) minimizes the value of the functional $\mathcal{I}$ among all nontrivial solutions.
The assumptions $(F_0')$, $(F_1')$ and $(F_2')$ are rather mild and reasonable and are ``almost necessary'' in the sense of Berestycki and Lions \cite{bl}: the nontriviality of the nonlinearity condition $(F_0')$ is clearly necessary to have a nontrivial solution; the assumption $(F_1')$ secures a proper variational formulation of the problem \eqref{p} by ensuring that the energy functional \(\mathcal{I}\) is well-defined on the natural Sobolev space \(H^1 (\R^N)\) through the Hardy--Littlewood--Sobolev and Sobolev inequalities; the condition $(F_2')$ is a sort of \emph{subcriticality} condition with respect to the limiting-case embeddings.
The analysis by a Poho\v zaev identity shows that the assumptions $(F_1')$ and $(F_2')$ are necessary in the homogeneous case \(F(s)=s^p/p\) \cite{mv13}.

The results in \cite{mv15} can thus be seen as a counterpart for Choquard-type equations 
of the result of Berestycki and Lions \cite{bl} which give similar ``almost necessary'' conditions for the existence of a groundstate to the equation
\begin{equation}\label{loc}
-\Delta u+u=G'(u)\quad\quad\quad\text{in }\R^N.
\end{equation}
The latter equation can be at least formally be obtained by \eqref{p} by passing to the limit as $\alpha \to0$ and setting $G={F^2}/2$.

Whereas the above-mentioned almost necessary conditions for existence of the Cho\-quard equation \eqref{p} and for the scalar field equation \eqref{loc} have been obtained in higher dimensions \(N \ge 3\), the latter result has been extended to the two-dimensional case \cite{bgk}, under the following assumptions
\bite
\item[$(G_0)$] there exists $s_0\in\R$ such that $G(s_0)>\frac{|s_0|^2}2$;
\item[$(G_1)$] for every $\theta >0$ there exists $C=C_\theta >0$ such that $|G'(s)|\le C_\t \min \{1, s^2\}  e^{\theta|s|^2}$ for every $s>0$;
\item[$(G_2)$] $\lim_{s \to 0} {G(s)/|s|^2}<1/2$.
\end{itemize}
This raises naturally the question whether there is a similar existence result for the Choquard equation \eqref{p} in the planar case.

In the present work, we provide a general existence result for groundstate solutions of problem \eqref{p} in the planar case $N = 2$, which is a two-dimensional counterpart of \cite{mv15} and a counterpart for the Choquard equation of \cite{bgk}. The counterparts of $(F_0'),\,(F_1'),\,(F_2')$ we need are the following:
\begin{itemize}
\item[$(F_0)$] there exists $s_0\in\R$ such that $F(s_0)\ne0$;
\item[$(F_1)$] for every $\theta >0$ there exists $C=C_\theta >0$ such that $|F'(s)|\le C_\theta\min\bigl\{1,|s|^\frac{\alpha}2\bigr\}e^{\theta|s|^2}$ for every $s>0$;
\item[$(F_2)$] $\ts\lim_{s \to 0}{F(s)}/{|s|^{1+\frac{\alpha}2}}=0$.
\end{itemize}
Our main result reads as follows:

\begin{theorem}\label{main}
If $N = 2$ and $F\in C^1(\R,\R)$ satisfies the conditions $(F_0)$, $(F_1)$ and $(F_2)$, then the problem \eqref{p} has a groundstate solution $u\in H^1(\R^2)\setminus \{0\}$, namely the function $u$ solves \eqref{p} and
$$\mathcal{I}(u)=c:=\inf\,\bigl\{\,\mathcal{I}(v)\st v\in H^1\bigl(\R^2\bigr)\bs\{0\}\text{ is a solution of }\eqref{p}\,\bigr\}.$$
\end{theorem}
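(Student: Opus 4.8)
The plan is to obtain the groundstate as a minimiser of $\mathcal{I}$ over the Poho\v zaev constraint and then to check that this constrained minimiser is an honest solution whose energy equals $c$.

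First I would verify that $\mathcal{I}\in C^1(H^1(\R^2),\R)$. Taking $\theta>0$ arbitrarily small in $(F_1)$ and combining with $(F_2)$ gives $|F(s)|\lesssim|s|^{1+\alpha/2}$ near $s=0$ and $|F(s)|\lesssim e^{\theta|s|^2}$ for large $|s|$; by the Moser--Trudinger inequality on $\R^2$ this yields $F(u)\in L^{4/(2+\alpha)}(\R^2)$ for every $u\in H^1(\R^2)$, and the Hardy--Littlewood--Sobolev inequality bounds the double integral $\int_{\R^2}(I_\alpha*F(u))F(u)$ by $\|F(u)\|_{L^{4/(2+\alpha)}}^2$; differentiability follows in the same way.

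Writing $T(u)=\tfrac12\int_{\R^2}|\nabla u|^2$, $M(u)=\tfrac12\int_{\R^2}|u|^2$ and $V(u)=\tfrac12\int_{\R^2}(I_\alpha*F(u))F(u)$, the dilation $u_t(x)=u(x/t)$ gives $T(u_t)=T(u)$, $M(u_t)=t^2M(u)$ and $V(u_t)=t^{2+\alpha}V(u)$, so that $\tfrac{d}{dt}\mathcal{I}(u_t)\big|_{t=1}=0$ is the Poho\v zaev identity $2M(u)=(2+\alpha)V(u)$. I would accordingly set
\[
\mathcal{P}=\bigl\{u\in H^1(\R^2)\setminus\{0\}:2M(u)=(2+\alpha)V(u)\bigr\}
\]
and study $c_*=\inf_{\mathcal{P}}\mathcal{I}$. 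Hypothesis $(F_0)$ produces a function with $V>0$ which, after dilation, lands on $\mathcal{P}$, so $\mathcal{P}\neq\es$; and on $\mathcal{P}$ one has $\mathcal{I}(u)=T(u)+\tfrac{\alpha}{2+\alpha}M(u)$, a quantity comparable to $\|u\|_{H^1}^2$. This identity makes every minimising sequence automatically bounded in $H^1(\R^2)$ and, after a short argument excluding $u\to0$, shows $c_*>0$.

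The heart of the matter, and the step I expect to be hardest, is compactness: by translation invariance a minimising sequence $(u_n)\subset\mathcal{P}$ may vanish or split. I would exclude vanishing by a Lions-type lemma---were $u_n\to0$ in $L^p$ for every $p>2$ then $V(u_n)\to0$, contradicting $M(u_n)=\tfrac{2+\alpha}2V(u_n)$ being bounded below---so that, after translating, $u_n\wk u\neq0$. The delicate point is the convergence $V(u_n)\to V(u)$: the subcritical exponential growth, i.e. the freedom to choose $\theta$ small relative to $\sup_n\|u_n\|_{H^1}^2$, gives uniform integrability of $|F(u_n)|^{4/(2+\alpha)}$ through Moser--Trudinger, and together with almost-everywhere convergence and the decay of $I_\alpha$---which makes two far-apart bumps interact negligibly and thereby forces strict subadditivity of the levels, ruling out dichotomy---this yields $F(u_n)\to F(u)$ in $L^{4/(2+\alpha)}(\R^2)$, hence $V(u_n)\to V(u)$. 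Weak lower semicontinuity of $T$ and $M$ then gives $2M(u)\le(2+\alpha)V(u)$, so dilating $u$ by the unique $t^*\le1$ restoring equality produces $\tilde u=u_{t^*}\in\mathcal{P}$ with $\mathcal{I}(\tilde u)=T(u)+\tfrac{\alpha}{2+\alpha}(t^*)^2M(u)\le\liminf_n\mathcal{I}(u_n)=c_*$; as $\tilde u\in\mathcal{P}$ forces $\mathcal{I}(\tilde u)\ge c_*$, the infimum $c_*$ is attained.

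It remains to show that the minimiser solves \eqref{p} and realises $c$. A constrained minimiser obeys $\mathcal{I}'(\tilde u)=\mu\,\Phi'(\tilde u)$ with $\Phi(u)=2M(u)-(2+\alpha)V(u)$; testing against the dilation generator $D=-x\cdot\nabla\tilde u$ and using $\langle\mathcal{I}'(\tilde u),D\rangle=\tfrac{d}{dt}\mathcal{I}((\tilde u)_t)\big|_{t=1}=\Phi(\tilde u)=0$ while $\langle\Phi'(\tilde u),D\rangle=-2\alpha M(\tilde u)\neq0$ forces $\mu=0$, whence $\mathcal{I}'(\tilde u)=0$ and $\tilde u$ solves \eqref{p}. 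Conversely, every nontrivial $H^1$ solution belongs to $\mathcal{P}$---once the Poho\v zaev identity is justified by elliptic regularity---so $c\ge c_*$, and combined with $\mathcal{I}(\tilde u)=c_*$ this gives $c=c_*$ and exhibits $\tilde u\in H^1(\R^2)\setminus\{0\}$ as the desired groundstate.
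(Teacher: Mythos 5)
Your overall strategy (minimization over the Poho\v zaev set $\mathcal{P}$, then killing the Lagrange multiplier by a dilation identity) is genuinely different from the paper's route, which is a mountain-pass argument with Jeanjean's scaling trick producing a Poho\v zaev--Palais--Smale sequence, followed by a construction of optimal paths through arbitrary solutions. Unfortunately, your version has a genuine gap at its central step, the compactness of the minimizing sequence. What weak convergence $u_n\rightharpoonup u$ plus almost-everywhere convergence and Moser--Trudinger uniform integrability actually give you for free is $F(u_n)\rightharpoonup F(u)$ weakly in $L^{4/(2+\alpha)}(\R^2)$, hence only the \emph{lower} semicontinuity $V(u)\le\liminf_n V(u_n)$ (the quadratic form $f\mapsto\int_{\R^2}(I_\alpha*f)f$ is convex and continuous, so weakly lower semicontinuous). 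But your dilation argument needs the \emph{opposite} inequality: you must know that no part of $F(u_n)$ leaks to infinity, so that $2M(u)\le(2+\alpha)V(u)$, i.e.\ $\Phi(u)\le0$, and the restoring dilation has $t^*\le1$. If dichotomy occurs, one can perfectly well have $\Phi(u)>0$ for the weak limit; then $t^*>1$, the map $t\mapsto T(u)+\frac{\alpha}{2+\alpha}t^2M(u)$ is increasing, and the comparison $\mathcal{I}(\tilde u)\le c_*$ collapses. Your parenthetical claim that the decay of $I_\alpha$ ``forces strict subadditivity of the levels, ruling out dichotomy'' is precisely the missing proof, and it is not a routine one: strict subadditivity arguments rely on scaling or homogeneity of the nonlinearity, and here $F$ is not homogeneous --- this is essentially why the authors abandon constrained minimization (they say so explicitly in the introduction) and instead use a Palais--Smale sequence, for which passing to the limit in the equation only requires \emph{local} convergence of $(I_\alpha*F(u_n))F'(u_n)$ against test functions in $C^1_0(\R^2)$, and the energy comparison $c\le\mathcal{I}(u)\le b\le c$ never needs strong convergence of the nonlocal term.

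Two further steps are stated formally and would need the paper's machinery to be made rigorous, though these are fillable rather than fatal. First, testing $\mathcal{I}'(\tilde u)=\mu\,\Phi'(\tilde u)$ against $D=-x\cdot\nabla\tilde u$ is illegitimate as written, since $D\notin H^1(\R^2)$ in general; the honest version is to observe that $\tilde u$ solves $-\Delta\tilde u+(1-2\mu)\tilde u=(1-(2+\alpha)\mu)\bigl(I_\alpha*F(\tilde u)\bigr)F'(\tilde u)$, run the regularity bootstrap (the analogue of Proposition~\ref{reg}) and the cut-off Poho\v zaev argument (Proposition~\ref{poho}) for \emph{this} equation, obtaining $2(1-2\mu)M(\tilde u)=(2+\alpha)(1-(2+\alpha)\mu)V(\tilde u)$, which combined with the constraint does yield $\alpha\mu=0$. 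Second, the Lagrange rule only applies if $\Phi'(\tilde u)\ne0$ in $H^1(\R^2)'$; the degenerate case $\Phi'(\tilde u)=0$ (a zeroth-order equation with no Laplacian to drive a regularity argument) must be excluded separately before you may write $\mathcal{I}'(\tilde u)=\mu\,\Phi'(\tilde u)$ at all. In short: the endgame of your proposal can be repaired with the paper's regularity and Poho\v zaev tools, but the concentration-compactness step, as the argument is currently structured, is an unproven assertion on which the whole construction rests.
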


Let us discuss the assumptions of Theorem~\ref{main}.
As above, the assumption \((F_0)\) is necessary for the existence of a nontrivial solution.
As before, the condition $(F_1)$ ensures needed the well-defineteness of the energy functional on the whole space $H^1(\R^2)$. It has a different shape, because in dimension $N = 2$, the critical nonlinearity for Sobolev embeddings is not anymore a power but rather an exponential-type nonlinearity.
More precisely, the integral of $\min \{1, u^2\} e^{\theta|u|^2}$ on \(\R^2\) is uniformly controlled on $H^1_0(B_1)$ if and only if $\t \int_{B_1}|\nabla u|^2\le 4\pi$ (see \citelist{\cite{mos}\cite{at}}); this is why the parameter $\t > 0$ appears in condition $(F_1)$. 
It will appear that the condition \((F_1)\) is strong enough at infinity. Indeed, by integrating the function \(F'\), it is possible to observe that for every \(\theta > 0\),
\begin{equation}
\label{f}
   \lim_{\vert s \vert \to \infty} \frac{|F(s)|+|F'(s)||s|}{e^{\theta|s|^2}}= 0.
\end{equation}
A subcriticality condition still needs to be imposed around $0$; that is the goal of the subcriticality condition \((F_2)\).

The assumptions $(F_0)$, $(F_1)$ and $(F_2)$ are still \emph{almost necessary}: in the case $F(s)=\frac{s^p}p$, they are satisfied if and only if $\ts p>1+\frac{\alpha}2$, and for $\ts p\le 1+\frac{\alpha}2$ the Choquard equation \eqref{p} has no nontrivial solutions (see \cite{mv13}).\\

In order to prove Theorem~\ref{main} the constraint minimization technique used in \cite{bl,bgk} for the local problem \eqref{loc} does not seem to work, as it introduces a Lagrange multiplier that cannot be absorbed through a suitable dilation because of the presence of three different scalings in the equation and of the nonhomogeneity of the nonlinearity.

Following \cite{mv15}, we use a mountain-pass construction.
We start by constructing a Palais--Smale sequence for the mountain-pass level
\begin{equation}\label{b}
b:=\inf_{\gamma\in\G}\sup_{t\in[0,1]}\mathcal{I}(\gamma(t)),
\end{equation}
where
\begin{equation}\label{gamma}
\Gamma:=\bigl\{\gamma\in C\bigl([0,1],H^1\bigl(\R^2\bigr)\bigr)\st \gamma(0)=0 \text{ and }\mathcal{I}(\gamma(1))<0\bigr\}.
\end{equation}
To avoid relying on an Ambrosetti--Rabinowitz superlinearity condition, we use a scaling trick due to Jeanjean \cite{jea}, which allows to construct Poho\v zaev--Palais--Smale sequence (Proposition~\ref{pps}), namely a Palais--Smale sequence which, in addition, satisfies asymptotically the Poho\v zaev identity
\begin{equation}\label{pu}
\mathcal{P}(u):=\int_{\R^2}|u|^2-\Bigl(1+\frac{\alpha}2\Bigr)\int_{\R^2}\bigl(I_\alpha*F(u)\bigr)F(u)=0.
\end{equation}
Such a condition will imply quite directly the boundedness of the sequence in the space $H^1(\R^2)$ and it will be crucial to get the convergence, hence the existence of a solution (Proposition~\ref{conv}).

We are left with showing that the solution $u$ is actually a groundstate.
To prove this, we first show that the solution $u$ itself satisfies the Poho\v zaev identity (Proposition~\ref{poho}). This will follow by simple calculations once a suitable regularity result is established (Proposition~\ref{reg}); this regularity turns out to be easier to prove from the assumption $(F_1)$ than in the higher-dimensional case \cite{mv15} where a suitable nonlocal Brezis--Kato regularity had to be proved.
The last ingredient that we need is an optimal path \(\gamma_v \in \Gamma \) associated to any solution $v$ of \eqref{p}. The construction of such paths (Proposition~\ref{path}) is inspired by \cite{jt,mv15} but it is more delicate in our two-dimensional case than in the higher dimensions $N\ge3$,  because dilations $t\mapsto v(\cdot/t) \in H^1 (\R^N)$ are not anymore continuous at $t=0$ when $N = 2$.

The content of the paper is the following: in Section~\ref{sectionPreliminaries} we provide some technical preliminaries; in Section~\ref{sectionPPS} we construct the Poho\v zaev--Palais--Smale sequence; in Section~\ref{sectionConvergence} we show that the sequence converges to a solution of \eqref{p}; in Section~\ref{sectionGroundstates} we prove that $u$ is actually a groundstate. In the last section we also state some qualitative result concerning the solutions, which can be proved directly following \cite{mv15}.

\section{Preliminaries}
\label{sectionPreliminaries}

In this section we present some preliminary results which we will need throughout the rest of this paper.
We start by reformulating in a  more convenient form the Moser--Trudinger inequality of Adachi and Tanaka \cite{at}. This quantitative estimate will play a crucial role throughout the paper.

\begin{proposition}[Moser--Trudinger inequality]\label{mt}
For any $\beta\in(0,4\pi)$ there exists $C=C_\beta>0$ such that for every \(u \in  H^1(\R^2)\) satisfying
\[
 \int_{\R^2} |\nabla u|^2\le 1,
\]
one has
$$
  \int_{\R^2}\min\bigl\{1,|u|^2\bigr\}e^{\beta |u|^2}\le C_\beta \int_{\R^2}|u|^2
$$
\end{proposition}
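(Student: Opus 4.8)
The plan is to reduce the Adachi–Tanaka Moser–Trudinger inequality to the stated form by treating separately the region where $|u|$ is small and the region where $|u|$ is large. The subtlety compared to the classical statement is the presence of the factor $\min\{1,|u|^2\}$ rather than $|u|^2$, and the fact that we want the right-hand side controlled by $\int_{\R^2}|u|^2$ rather than by $\|u\|_{L^2}^2$ raised to some power.

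First I would recall the Adachi–Tanaka inequality in its standard form: for $\beta\in(0,4\pi)$ there is a constant $C_\beta$ such that whenever $\int_{\R^2}|\nabla u|^2\le 1$, one has
\[
\int_{\R^2}\bigl(e^{\beta|u|^2}-1\bigr)\le C_\beta\int_{\R^2}|u|^2.
\]
Note that the ``$-1$'' is essential for integrability at infinity, and that the right-hand side is exactly $\|u\|_{L^2}^2$, which matches our target. The work is therefore to pass from $e^{\beta|u|^2}-1$ to $\min\{1,|u|^2\}e^{\beta|u|^2}$.

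The key pointwise comparison is the following. On the set $\{|u|\le 1\}$ we have $\min\{1,|u|^2\}=|u|^2$, and I would use the elementary bound $|u|^2 e^{\beta|u|^2}\le e^\beta|u|^2$ there (since $|u|^2\le 1$), which is already controlled by $\int_{\R^2}|u|^2$ up to the constant $e^\beta$. On the complementary set $\{|u|>1\}$ we have $\min\{1,|u|^2\}=1$, so we must bound $\int_{\{|u|>1\}}e^{\beta|u|^2}$. Here I would pick an auxiliary exponent $\beta'\in(\beta,4\pi)$ and observe that for $|u|>1$,
\[
e^{\beta|u|^2}=e^{\beta'|u|^2}e^{-(\beta'-\beta)|u|^2}\le e^{-(\beta'-\beta)}e^{(\beta'-\beta)}\cdots
\]
more cleanly, one writes $e^{\beta|u|^2}\le C\bigl(e^{\beta'|u|^2}-1\bigr)$ on $\{|u|>1\}$ for a suitable constant depending on $\beta,\beta'$, since the ratio $e^{\beta|u|^2}/(e^{\beta'|u|^2}-1)$ is bounded for $|u|\ge 1$; then Adachi–Tanaka with exponent $\beta'$ gives $\int_{\{|u|>1\}}e^{\beta|u|^2}\le C\int_{\R^2}|u|^2$. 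Combining the two regions yields the claim with $C_\beta$ depending on $\beta$ and the chosen $\beta'$.

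The main obstacle, and the reason one reformulates the inequality at all, is the integrability at infinity: a naive bound $\min\{1,|u|^2\}e^{\beta|u|^2}\le e^\beta(e^{\beta|u|^2}-1)$ fails on $\{|u|\le 1\}$ near the zero set of $u$, and more importantly one cannot simply use $\min\{1,|u|^2\}\le|u|^2$ together with the unmodified exponential because $\int|u|^2 e^{\beta|u|^2}$ need not be controlled by $\int|u|^2$ on the large-value region. The device of trading a slightly larger exponent $\beta'<4\pi$ for the gain of the subtracted constant is exactly what resolves this, and it is available precisely because $\beta<4\pi$ is strict, leaving room to choose $\beta'$. I expect the splitting of the integration domain and this exponent-trading step to be where all the care is needed; the rest is elementary algebra.
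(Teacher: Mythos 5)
Your proof is correct, and it relies on the same key input as the paper's: the Adachi--Tanaka inequality $\int_{\R^2}(e^{\beta|u|^2}-1)\le C\int_{\R^2}|u|^2$. The implementations differ in how one passes from $e^{\beta|u|^2}-1$ to $\min\{1,|u|^2\}e^{\beta|u|^2}$: you split the plane into $\{|u|\le 1\}$ (where the bound $|u|^2e^{\beta|u|^2}\le e^{\beta}|u|^2$ needs no exponential inequality at all) and $\{|u|>1\}$ (where you compare $e^{\beta|u|^2}$ with $e^{\beta'|u|^2}-1$ for an auxiliary $\beta'\in(\beta,4\pi)$), whereas the paper uses a single pointwise inequality valid for all $s\ge 0$, namely $\bigl(1-\frac1e\bigr)\min\{1,s\}e^s\le e^s-1\le\min\{1,s\}e^s$ (the paper misprints $\max$ for $\min$; as printed the left inequality fails for large $s$), applied with $s=\beta|u|^2$ to get $\min\{1,|u|^2\}e^{\beta|u|^2}\le \max\{1,\beta^{-1}\}\frac{e}{e-1}\bigl(e^{\beta|u|^2}-1\bigr)$ everywhere, and then Adachi--Tanaka with the same exponent $\beta$. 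One correction to your commentary: the exponent-trading device is not what resolves the difficulty, and the room between $\beta$ and $4\pi$ plays no role in it (strictness $\beta<4\pi$ is needed only for Adachi--Tanaka itself). Indeed, on $\{|u|>1\}$ one has $e^{\beta|u|^2}\le\frac{e^{\beta}}{e^{\beta}-1}\bigl(e^{\beta|u|^2}-1\bigr)$, since $x\mapsto x/(x-1)$ is decreasing for $x>1$ and $e^{\beta|u|^2}\ge e^{\beta}$ there; so $\beta'=\beta$ works. Likewise, your claim that a pointwise bound by a multiple of $e^{\beta|u|^2}-1$ must fail near the zero set of $u$ is true only for your particular constant $e^{\beta}$: with the constant $\max\{1,\beta^{-1}\}\frac{e}{e-1}$ the pointwise bound holds on all of $\R^2$, which is exactly the paper's one-line route.
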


\begin{proof}
The result follows the fact \cite{at}*{Theorem 0.1} that under the conditions of the theorem,
\begin{equation*}
\int_{\R^2}\bigl(e^{\beta |u|^2}-1\bigr)\le C\int_{\R^2}|u|^2.
\end{equation*}
together with the elementary inequalities valid for every \(s \ge 0\),
\[
  \Bigl(1-\frac{1}{e}\Bigr)\max\{1,s\}e^s\le e^s-1\le\max\{1,s\}e^s.\qedhere
\]
\end{proof}

We will also use the Hardy--Littlewood--Sobolev inequality to deal with the nonlocal term (see for example \cite{ll}*{Theorem 4.3}):

\begin{proposition}[Hardy--Littlewood--Sobolev inequality]\label{hls}
For any $p\in [1,\frac{2}\alpha )$ and $f\in L^p(\R^2)$ there exists a constant $C=C_{\alpha,p}$ such that
$$\|I_\alpha \ast f\|_{L^\frac{2p}{2-\alpha p}(\R^2)}\le C\|f\|_{L^p(\R^2)}.$$
\end{proposition}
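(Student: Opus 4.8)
The plan is to recognize Proposition~\ref{hls} as the classical Hardy--Littlewood--Sobolev inequality for the Riesz kernel on the plane and to prove it by Hedberg's pointwise method, which controls the convolution by the Hardy--Littlewood maximal function $Mf$. Since $I_\alpha(x)=c_\alpha|x|^{-(2-\alpha)}$ for some constant $c_\alpha>0$, and since both sides of the claimed inequality are monotone in $|f|$, I would first reduce to the case $f\ge 0$. The target exponent is $r:=\frac{2p}{2-\alpha p}$, which is finite and strictly larger than $p$ precisely because $p<2/\alpha$; this constraint will reappear below as the decisive integrability threshold, so it is reassuring that it is exactly the hypothesis.

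For the core estimate I would split, for a radius $R>0$ to be chosen pointwise, $I_\alpha*f(x)=c_\alpha\int_{|x-y|\le R}+c_\alpha\int_{|x-y|>R}$. On the near region, decomposing into dyadic annuli $2^{-j-1}R<|x-y|\le 2^{-j}R$ and bounding $|x-y|^{-(2-\alpha)}\le(2^{-j-1}R)^{-(2-\alpha)}$ together with $\int_{|x-y|\le 2^{-j}R}f\le \pi(2^{-j}R)^2Mf(x)$, the sum of the resulting geometric series in $2^{-j\alpha}$ (convergent since $\alpha>0$) gives a near part $\lesssim R^\alpha Mf(x)$. On the far region I would apply H\"older's inequality with exponents $p$ and $p'$, so that the kernel contributes $\bigl(\int_{|z|>R}|z|^{-(2-\alpha)p'}\bigr)^{1/p'}$; this tail integral converges exactly when $(2-\alpha)p'>2$, i.e. when $p<2/\alpha$, and produces a far part $\lesssim R^{\alpha-2/p}\|f\|_{L^p}$. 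Hence $I_\alpha*f(x)\lesssim R^\alpha Mf(x)+R^{\alpha-2/p}\|f\|_{L^p}$, and choosing $R=\bigl(\|f\|_{L^p}/Mf(x)\bigr)^{p/2}$ balances the two terms to yield the pointwise bound $I_\alpha*f(x)\lesssim \bigl(Mf(x)\bigr)^{1-\alpha p/2}\|f\|_{L^p}^{\alpha p/2}$.

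To conclude I would raise to the power $r$ and integrate: because $r(1-\alpha p/2)=p$ and $r\cdot\frac{\alpha p}{2}+p=r$ by the very definition of $r$, this gives $\|I_\alpha*f\|_{L^r}^r\lesssim\|f\|_{L^p}^{r\alpha p/2}\,\|Mf\|_{L^p}^p$, and the Hardy--Littlewood maximal theorem $\|Mf\|_{L^p}\lesssim\|f\|_{L^p}$ then finishes the estimate. The main obstacle is the endpoint $p=1$: there the maximal theorem holds only in \emph{weak} form, so Hedberg's bound yields merely a weak-type $(1,r)$ inequality (and indeed $I_\alpha$ itself shows the strong bound genuinely fails at $p=1$). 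One must therefore either restrict to $p>1$, which is the substantive range used throughout this paper, or else establish two weak-type endpoint bounds and invoke the Marcinkiewicz interpolation theorem to recover the strong bound for $1<p<2/\alpha$. For that open range the argument above is complete and self-contained modulo the classical maximal inequality.
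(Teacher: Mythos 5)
Your proof is correct on the open range $1<p<\frac{2}{\alpha}$, and it necessarily differs from the paper, because the paper gives no proof at all: Proposition~\ref{hls} is simply quoted from Lieb and Loss \cite{ll}*{Theorem 4.3}. Your Hedberg-style argument is the standard self-contained alternative, and the details check out: the dyadic near part gives $R^{\alpha}Mf(x)$, the H\"older far part gives $R^{\alpha-2/p}\|f\|_{L^p}$ with the tail integral convergent exactly when $p<\frac{2}{\alpha}$, the optimal $R$ yields $I_\alpha*f(x)\lesssim (Mf(x))^{1-\alpha p/2}\|f\|_{L^p}^{\alpha p/2}$, and the exponent bookkeeping $r(1-\frac{\alpha p}{2})=p$ and $r\frac{\alpha p}{2}+p=r$ is right, so the maximal theorem finishes the proof. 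What your route buys, besides self-containedness, is a correct diagnosis of the endpoint: the strong-type bound genuinely fails at $p=1$ (for any nonnegative $f$ with $\int_{\R^2} f=1$ one has $I_\alpha*f(x)\gtrsim |x|^{-(2-\alpha)}$ for large $|x|$, and $|x|^{-2}$ is not integrable at infinity), so the proposition as stated, with $p\in[1,\frac{2}{\alpha})$, is slightly too generous; note that the cited theorem of Lieb and Loss also assumes $p>1$. This imprecision is harmless for the paper, since every invocation of Proposition~\ref{hls} (the well-definedness of $\mathcal{I}$, Lemma~\ref{bb}, Proposition~\ref{conv}) uses $p=\frac{4}{2+\alpha}\in(1,2)$, exactly the range your argument covers; your alternative fallback via weak-type bounds and Marcinkiewicz interpolation is also viable but unnecessary once one has the pointwise bound together with the strong maximal inequality for $p>1$.
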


Combining the last two results with the assumption on $F$ and \eqref{f} we deduce that the energy functional is well-defined on $H^1(\R^2)$:

\begin{proposition}
If $F$ satisfies $(F_1)$, then the energy functional $\mathcal{I}$ defined by \eqref{iu} is well-defined and continuously differentiable.
\end{proposition}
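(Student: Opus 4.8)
The plan is to establish three things about the functional
$\mathcal{I}(u)=\tfrac12\int_{\R^2}(|\nabla u|^2+|u|^2)-\tfrac12\int_{\R^2}(I_\alpha*F(u))F(u)$: first, that both integrals are finite for every $u\in H^1(\R^2)$; second, that $\mathcal{I}$ is Gâteaux differentiable with the expected derivative; and third, that this derivative is continuous, which (together with Gâteaux differentiability) upgrades to continuous Fréchet differentiability. The quadratic term $\tfrac12\int(|\nabla u|^2+|u|^2)=\tfrac12\|u\|_{H^1}^2$ is trivially smooth, so all the work concerns the nonlocal term $\mathcal{N}(u):=\tfrac12\int_{\R^2}(I_\alpha*F(u))F(u)$.

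First I would control $F(u)$ in an appropriate Lebesgue space. From $(F_1)$ and the integrated estimate \eqref{f}, for any $\theta>0$ there is a constant with $|F(s)|\le C_\theta\min\{1,|s|^{1+\frac\alpha2}\}e^{\theta|s|^2}$; in particular $|F(s)|\le C_\theta|s|^{1+\frac\alpha2}e^{\theta|s|^2}$. The natural choice is $p=\frac{2}{2+\alpha}$, so that $1+\frac\alpha2=\frac1p$ and $p\in[1,\frac2\alpha)$ as required by Proposition~\ref{hls}; then $|F(u)|^p\le C_\theta^p\,|u|^2e^{p\theta|u|^2}$. To bound $\int|F(u)|^p=\|F(u)\|_{L^p}^p$, normalize by writing $w=u/\|\nabla u\|_{L^2}$ so that $\int|\nabla w|^2\le1$ and choose $\theta$ small enough that $\beta:=p\theta\|\nabla u\|_{L^2}^2<4\pi$; applying Proposition~\ref{mt} to $w$ gives $\int_{\R^2}|u|^2e^{p\theta|u|^2}\le C_\beta\|\nabla u\|_{L^2}^2\int_{\R^2}|w|^2$, which is finite. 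Hence $F(u)\in L^p(\R^2)$, Proposition~\ref{hls} yields $I_\alpha*F(u)\in L^{\frac{2p}{2-\alpha p}}=L^{p'}$, and Hölder's inequality makes $\mathcal{N}(u)=\tfrac12\int(I_\alpha*F(u))F(u)$ finite. So $\mathcal{I}$ is well-defined on $H^1(\R^2)$.

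For differentiability, the candidate derivative is $\mathcal{N}'(u)[\varphi]=\int_{\R^2}(I_\alpha*F(u))F'(u)\varphi$. I would first check this is a bounded linear functional: by Hardy--Littlewood--Sobolev and Hölder it suffices that $F'(u)\varphi\in L^p$, and since $|F'(s)|\le C_\theta\min\{1,|s|^{\alpha/2}\}e^{\theta|s|^2}$ one estimates $F'(u)$ in a Lebesgue space complementary to that of $\varphi$, again via Proposition~\ref{mt} to absorb the exponential. To justify that this is genuinely the Gâteaux derivative I would write the difference quotient $\frac1t[F(u+t\varphi)-F(u)]\to F'(u)\varphi$ pointwise and apply a dominated-convergence argument, the dominating function coming from the mean-value theorem together with the exponential bounds \eqref{f}; the bilinear structure of $\mathcal{N}$ lets one handle the two factors by the same Moser--Trudinger-plus-HLS scheme.

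The main obstacle, and the step I would treat most carefully, is the continuity of $u\mapsto\mathcal{N}'(u)$ from $H^1(\R^2)$ into its dual. If $u_n\to u$ in $H^1$, then along a subsequence $u_n\to u$ a.e.\ and $|u_n|$ is dominated by an $H^1$ function, so $F(u_n)\to F(u)$ and $F'(u_n)\to F'(u)$ a.e.; the delicate point is promoting these to norm convergence $F(u_n)\to F(u)$ in $L^p$ and $F'(u_n)\to F'(u)$ in the relevant space. The exponential nonlinearity makes the dominating functions depend on $\|\nabla u_n\|_{L^2}$, so one must exploit that $\|\nabla u_n\|_{L^2}$ stays bounded (indeed converges) to keep the Moser--Trudinger constant $\beta$ strictly below $4\pi$ uniformly in $n$; this uniform subcriticality is what allows a Vitali- or generalized-dominated-convergence argument to close the estimate. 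Once $F(u_n)\to F(u)$ in $L^p$ and the analogous convergence for $F'$ are established, continuity of the convolution (HLS) and of the pairing (Hölder) give $\mathcal{N}'(u_n)\to\mathcal{N}'(u)$ in the dual, and since a Gâteaux-differentiable map with continuous derivative is $C^1$, the proof concludes.
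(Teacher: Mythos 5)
Your overall scheme (Moser--Trudinger to control the exponential growth, Hardy--Littlewood--Sobolev plus duality for the nonlocal term, then differentiability of the superposition operators) is the same as the paper's, but as written there are two genuine problems.

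First, your exponent is wrong, and the step fails as stated. You set \(p=\frac{2}{2+\alpha}\) from the relation \(1+\frac{\alpha}{2}=\frac1p\); but then \(p<1\) for every \(\alpha\in(0,2)\), so \(p\) does \emph{not} lie in the range \([1,\frac2\alpha)\) required by Proposition~\ref{hls} (and \(L^p\) is not even a normed space, so the H\"older pairing you invoke makes no sense). Moreover, with your \(p\) one has \((1+\frac\alpha2)p=1\), so the bound would be \(|F(u)|^p\le C|u|\,e^{p\theta|u|^2}\), not \(|u|^2e^{p\theta|u|^2}\) as you claim. The exponent that makes the duality pairing close is found by requiring \(\frac{2p}{2-\alpha p}\) to equal the conjugate exponent \(p'\), which forces \(p=\frac{4}{2+\alpha}\) (the paper's choice); then \(|F(s)|^{4/(2+\alpha)}\le C\min\{1,|s|^2\}e^{\theta'|s|^2}\), which is exactly the quantity Proposition~\ref{mt} controls, and \(I_\alpha*F(u)\in L^{4/(2-\alpha)}=\bigl(L^{4/(2+\alpha)}\bigr)'\). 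Relatedly, you should not discard the factor \(\min\{1,\cdot\}\): Proposition~\ref{mt} bounds \(\min\{1,|u|^2\}e^{\beta|u|^2}\), not \(|u|^2e^{\beta|u|^2}\), and although the discrepancy can be absorbed by slightly enlarging \(\theta\), your inequality \(\int_{\R^2}|u|^2e^{p\theta|u|^2}\le C_\beta\|\nabla u\|_{L^2}^2\int_{\R^2}|w|^2\) does not follow from Proposition~\ref{mt} as invoked.

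Second, and more seriously, the continuity of the derivative --- which you correctly identify as ``the main obstacle'' --- is left at the level of a declaration (``a Vitali- or generalized-dominated-convergence argument''), and this is precisely where essentially all of the paper's work lies. A dominated-convergence argument cannot be waved through here: \(H^1\)-convergence gives an \(L^2\) dominant for a subsequence of \((|u_n|)_{n\in\N}\), but no integrable dominant for \(\min\{1,|u_n|^2\}e^{\theta|u_n|^2}\), and the uniform integrability of exactly these quantities is the nontrivial content, not a tool one may cite. The paper proves that \(\mathcal{E}(u)=F'(u)\) is continuous from \(H^1(\R^2)\) into \(L^{4/\alpha}(\R^2)\) by a Fatou (Brezis--Lieb-type) inequality which reduces the claim to \(\limsup_{n\to\infty}\int_{\R^2}\min\{1,|u_n|^2\}e^{\frac{4\theta}{\alpha}|u_n|^2}\le\int_{\R^2}\min\{1,|u|^2\}e^{\frac{4\theta}{\alpha}|u|^2}\), and then establishes this by splitting \(\R^2\) into \(A_n^\lambda=\{|u_n|\ge\lambda\}\) and its complement, using dominated convergence off \(A_n^\lambda\) and Cauchy--Schwarz, Chebyshev and Proposition~\ref{mt} to show the contribution of \(A_n^\lambda\) is \(O(1/\lambda)\) uniformly in \(n\). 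Your observation that the Moser--Trudinger exponent can be kept uniformly below \(4\pi\) along the sequence is necessary but far from sufficient. (Your framework of G\^ateaux differentiability plus continuity of the derivative implying \(C^1\) is legitimate --- the paper instead gets Fr\'echet differentiability directly from the identity \(\mathcal{F}(u_n)-\mathcal{F}(u)-\mathcal{E}(u)(u_n-u)=\int_0^1\bigl(\mathcal{E}((1-\tau)u+\tau u_n)-\mathcal{E}(u)\bigr)(u_n-u)\dif\tau\) --- but both routes stand or fall with the continuity of \(\mathcal{E}\), which you have not proved.)
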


\begin{proof}
We first consider the superposition map \(\mathcal{E}\) defined for each \(u \in H^1 (\R^2)\) and \(x \in \R^2\) by 
\(\mathcal{E}(u) (x) = F' (u(x))\). 
We claim that \(\mathcal{E}\) is well-defined and continuous as a map 
from \(H^1 (\R^2)\) to \(L^{4/\alpha} (\R^2)\). Indeed by assumption \((F_1)\), for every \(\theta > 0\),
and \(s \in \R\), we have  
\[
  \abs{F' (s)}^\frac{4}{\alpha} 
  \le C_\theta^\frac{4}{\alpha} \min \{1, s^2\} e^{\frac{4\theta}{\alpha} \abs{s}^2}.
\]
If \(u \in H^1 (\R^2)\), we take \(\theta > 0\) such that \(\int_{\R^2} \abs{\nabla u}^2 < \frac{\alpha \pi}{2 \theta}\).
We observe that 
\[
\abs{F' (u)}^\frac{4}{\alpha} 
  \le C_\theta^\frac{4}{\alpha} \min \{1, \abs{u}^2\} e^{\frac{4\theta}{\alpha} \abs{u}^2}
\]
on \(\R^2\), 
where the right-hand side is integrable in view of the Moser--Trudinger inequality (Proposition~\ref{mt}); therefore the map \(\mathcal{E}:H^1 (\R^2) \to L^{4/\alpha} (\R^2)\) is well-defined.

If now the sequence \((u_n)_{n \in \N}\) converges to \(u\) in \(H^1 (\R^2)\), then we can assume without loss of generality 
that \(\nu := \sup_{n \in \N} \int_{\R^2} \abs{\nabla u}^2 < \frac{\alpha \pi}{2 \theta}\)
and that \((u_n)_{n \in \N}\) converges to \(u\) almost everywhere.
We have then for some constant \(C \ge 0\),
\[
 C \bigl(\min \{1, \abs{u}^2\} e^{\frac{4\theta}{\alpha} \abs{u}^2} + \min \{1, \abs{u_n}^2\} e^{\frac{4\theta}{\alpha} \abs{u_n}^2}\bigr) 
 - \abs{F' (u) - F' (u_n)}^\frac{4}{\alpha} \ge 0,
\]
for each \(n \in \N\) almost everywhere in \(\R^2\).
By Fatou's lemma we get 
\begin{multline*}
 \liminf_{n \to \infty} 
 \int_{\R^2}   C \bigl(\min \{1, \abs{u}^2\} e^{\frac{4\theta}{\alpha} \abs{u}^2} 
 + \min \{1, \abs{u_n}^2\} e^{\frac{4\theta}{\alpha} \abs{u_n}^2}\bigr)
 - \abs{F' (u) - F' (u_n)}^\frac{4}{\alpha}\\
 \ge 2 C  \int_{\R^2}  \min \{1, \abs{u}^2\} e^{\frac{4\theta}{\alpha} \abs{u}^2}
\end{multline*}
and therefore 
\[
 \limsup_{n \to \infty} \int_{\R^2} \abs{F' (u) - F' (u_n)}^\frac{4}{\alpha}
 \le C \limsup_{n \to \infty} \int_{\R^2} \min \{1, \abs{u_n}^2\} e^{\frac{4\theta}{\alpha} \abs{u_n}^2} - \min \{1, \abs{u}^2\} e^{\frac{4\theta}{\alpha} \abs{u}^2}.
\]
If we consider the set \(A_n^\lambda = \{ x \in \R^2 \st \abs{u_n (x)} \ge \lambda\}\),
we have by Lebesgue's dominated convergence theorem, for every \(\lambda > 0\), 
\begin{multline*}
 \limsup_{n \to \infty} \int_{\R^2 \setminus A_n^\lambda} \min \{1, \abs{u_n}^2\} e^{\frac{4\theta}{\alpha} \abs{u_n}^2}\\
 \le \limsup_{n \to \infty} \int_{\R^2 \setminus A_n^\lambda} \bigl(\min \{1, \abs{u_n}^2\} e^{\frac{4\theta}{\alpha} \abs{u_n}^2} - \min \{1, \abs{u}^2\} e^{\frac{4\theta}{\alpha} \min(\abs{u}^2, \lambda^2)}\bigr) \\
 \shoveright{+ \int_{\R^2} \min \{1, \abs{u}^2\} e^{\frac{4\theta}{\alpha} \abs{u}^2}}\\
 \le \int_{\R^2} \min \{1, \abs{u}^2\} e^{\frac{4\theta}{\alpha} \abs{u}^2}.
\end{multline*}
On the other hand, we have by the Cauchy--Schwarz inequality, the Chebyshev inequality and the Moser--Trudinger inequality (Proposition~\ref{mt})
\[
 \int_{A_n^\lambda} \min \{1, \abs{u_n}^2\} e^{\frac{4\theta}{\alpha} \abs{u_n}^2}
 \le \abs{A_n^\lambda}^\frac{1}{2} \Bigl( \int_{A_n^\lambda} \min \{1, \abs{u_n}^2\} e^{\frac{8\theta}{\alpha} \abs{u_n}^2}\Bigr)^\frac{1}{2}
 \le \frac{C}{\lambda} \int_{\R^2} \abs{u_n}^2.
\]
This allows to conclude that the map \(\mathcal{E} : H^1 (\R^2) \to L^{4/\alpha} (\R^2)\) is continuous.

We now consider the map \(\mathcal{F} : H^1 (\R^2) \to L^{4/(2 + \alpha)} (\R^2)\) defined for each \(u \in H^1 (\R^2)\) by \(\mathcal{F} (u) = F \circ u\).
We observe that for every \(s \in \R\),
\[
 F (s) = \int_0^1 F' (\tau s)s \dif \tau,
\]
and thus for almost every \(x \in \R^2\),
\[
 F (u (x)) = \int_0^1 F'(\tau u (x)) u(x) \dif \tau.
\]
It follows thus from the first part of the proof that \(\mathcal{F}\) is well-defined from \(H^1 (\R^2)\) to \(L^{4/(2 + \alpha)} (\R^2)\).

For the differentiability we consider a sequence \((u_n)_{n \in \N}\) converging strongly to \(u\) in \(H^1 (\R^2)\). We observe that for each \(n \in \N\),
\[
  \mathcal{F} (u_n) - \mathcal{F} (u) - \mathcal{E} (u) (u_n - u)
  = \int_0^1 (\mathcal{E} ((1 - \tau)u + \tau (u_n)) - \mathcal{E} (u)) (u_n - u)\dif \tau,
\]
and thus by H\"older's inequality
\[
   \Vert \mathcal{F} (u_n) - \mathcal{F} (u) - \mathcal{E} (u) (u_n - u)\Vert_{L^{4/(2 + \alpha)}}
   \le \int_0^1 \Vert \mathcal{E} ((1 - \tau)u + \tau (u_n)) - \mathcal{E} (u)\Vert_{L^{4/\alpha}}
   \Vert u_n - u\Vert_{L^2}.
\]
By the convergence of the sequence \((u_n)_{n \in \N}\) and the continuity of the functional \(\mathcal{E}\), it follows that, as \(n \to \infty\),  
\[
   \Vert \mathcal{F} (u_n) - \mathcal{F} (u) - \mathcal{E} (u_n) (u_n - u)\Vert_{L^{4/(2 + \alpha)}}
   = o (\Vert u_n - u\Vert_{L^2}),
\]
that is, \(\mathcal{E}\) represents the Fr\'echet differential of the functional \(\mathcal{F}\).
Since \(\mathcal{E}\) is continuous, it follows that \(\mathcal{F}\) is of class \(C^1\).

Finally, we consider the quadratic form \(\mathcal{Q}\) defined for \(f \in L^{4/(2 + \alpha)}\)
by 
\[
 \mathcal{Q} (f) = \int_{\R^2} (I_\alpha \ast f) f.
\]
By the Hardy--Littlewood--Sobolev inequality (Proposition~\ref{hls}), the quadratic form \(\mathcal{Q}\) is bounded on bounded sets of the space \(L^{4/(2 + \alpha)} (\R^2)\). This implies that \(\mathcal{Q}\) is continuously differentiable and thus the functional 
\[
 u \in H^1 (\R^2) \mapsto \mathcal{Q} (\mathcal{F} (u), \mathcal{F} (u)) = \int_{\R^2} \bigl(I_\alpha \ast F (u)\bigr) F (u)
\]
is continuously differentiable. 
By the smoothness of the norm on a Hilbert space, we conclude that the functional \(\mathcal{I}\) is continuously differentiable.
\end{proof}

Finally, we will use the following improvement of Proposition~\ref{hls} when one has some more $L^p$ integrability:

\begin{proposition}\label{linf}
For any $\ts p\in[1,\frac{2}\a),\,q\in(\frac{2}\alpha,+\infty)$ and $f\in L^p(\R^2)\cap L^q(\R^2)$ there exists $C=C_{\alpha,p,q}$ such that
$$\|I_\alpha \ast f\|_{L^\infty(\R^2)}\le C\bigl(\|f\|_{L^p(\R^2)}+\|f\|_{L^q(\R^2)}\bigr).$$
\end{proposition}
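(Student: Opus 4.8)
The plan is to bound the convolution pointwise, uniformly in the base point, by splitting the domain of integration into a neighborhood of the singularity of the Riesz kernel and its complement, and to estimate each piece by H\"older's inequality against the appropriate norm of $f$. Recall that in dimension $N = 2$ the Riesz potential has the explicit form $I_\alpha (z) = c_\alpha \abs{z}^{\alpha - 2}$ for some constant $c_\alpha > 0$, so that for every $x \in \R^2$,
\[
 (I_\alpha \ast f)(x) = c_\alpha \int_{\R^2} \frac{f(y)}{\abs{x - y}^{2 - \alpha}} \dif y.
\]
Fixing $x \in \R^2$, I would write this integral as the sum of the contribution over the unit ball $B_1 (x)$ and the contribution over its complement, and treat the two separately.

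For the near contribution, the idea is to absorb the local singularity using the \emph{higher} integrability exponent $q$. Applying H\"older's inequality with exponents $q$ and $q' = q/(q - 1)$ gives
\[
 \int_{B_1 (x)} \frac{\abs{f(y)}}{\abs{x - y}^{2 - \alpha}} \dif y
 \le \|f\|_{L^q (\R^2)} \Bigl( \int_{B_1 (0)} \frac{\dif z}{\abs{z}^{(2 - \alpha) q'}} \Bigr)^{1/q'},
\]
and the kernel integral is finite precisely when $(2 - \alpha) q' < 2$, which a direct computation shows to be equivalent to $q > 2/\alpha$. For the far contribution, the slow decay of the kernel at infinity is compensated by the \emph{lower} exponent $p$: H\"older's inequality with exponents $p$ and $p' = p/(p - 1)$ (reading $p' = \infty$ when $p = 1$) yields
\[
 \int_{\R^2 \setminus B_1 (x)} \frac{\abs{f(y)}}{\abs{x - y}^{2 - \alpha}} \dif y
 \le \|f\|_{L^p (\R^2)} \Bigl( \int_{\R^2 \setminus B_1 (0)} \frac{\dif z}{\abs{z}^{(2 - \alpha) p'}} \Bigr)^{1/p'},
\]
where the tail integral converges exactly when $(2 - \alpha) p' > 2$, equivalently $p < 2/\alpha$; in the endpoint case $p = 1$ one simply uses $\abs{z}^{\alpha - 2} \le 1$ on $\R^2 \setminus B_1 (0)$.

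Adding the two estimates produces a constant $C = C_{\alpha, p, q}$, independent of $x$, with $\abs{(I_\alpha \ast f)(x)} \le C \bigl(\|f\|_{L^p (\R^2)} + \|f\|_{L^q (\R^2)}\bigr)$, and taking the supremum over $x$ gives the claim. The only point that requires attention, although it is entirely elementary, is the verification that the two admissible ranges of exponents are exactly those making the local and the tail kernel integrals converge: the singularity of $I_\alpha$ at the origin forces high integrability ($q > 2/\alpha$), while its slow decay at infinity forces low integrability ($p < 2/\alpha$). I do not expect any genuine obstacle beyond this bookkeeping, the whole argument being a routine splitting-and-H\"older estimate.
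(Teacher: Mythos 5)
Your proposal is correct and follows essentially the same argument as the paper's own proof: split the convolution at the unit ball around $x$, apply H\"older's inequality with exponent $q$ on the near part and $p$ on the far part, and check that the admissible ranges $q>\frac{2}{\alpha}$ and $p<\frac{2}{\alpha}$ are exactly what make the two kernel integrals converge. If anything, your write-up is slightly more careful than the paper's, since you treat the endpoint $p=1$ explicitly and correctly place the second kernel integral over $\R^2\setminus B_1$ (the paper's display writes it over $B_1$, an evident typo).
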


\begin{proof}
The result is classical. We give its short proof for the convenience of the reader.
By choosing $p,q$ in those range we have $\ts(2-\a)\frac{q}{q-1}<2<(2-\a)\frac{p}{p-1}$; therefore, through splitting the integral and H\"older inequality we get for every \(x \in \R^2\)
\begin{equation*}
\begin{split}
|I_\alpha*f(x)|&\le C\int_{\R^2}\frac{|f(x-y)|}{|y|^{2-\alpha}}\mrm dy\\
&\le C\left(\int_{B_1}\frac{\mrm dy}{|y|^{(2-\a)\frac{q}{q-1}}}\right)^{1-\frac{1}q}\|f\|_{L^q\left(B_1(x)\right)}\\
& \hspace{5cm}+C\left(\int_{B_1}\frac{\mrm dy}{|y|^{(2-\a)\frac{p}{p-1}}}\right)^{1-\frac{1}p}\|f\|_{L^p\left(\R^2\sm B_1(x)\right)}\\
&\le C'\bigl(\|f\|_{L^p(\R^2)}+\|f\|_{L^q(\R^2)}\bigr).\qedhere
\end{split}
\end{equation*}
\end{proof}

\section{Construction of a Poho\v zaev--Palais--Smale sequence}
\label{sectionPPS}

In this section we show the existence of a Poho\v zaev--Palais--Smale sequence at the level $b$ defined by \eqref{b}.
In other words, we construct a sequence of almost critical points which asymptotically satisfies the equation \eqref{p} and the Poho\v zaev identity \eqref{pu}.

\begin{proposition}\label{pps}
If the function $F\in C^1(\R,\R)$ satisfies the assumptions $(F_0)$ and $(F_1)$, then there exists a sequence $(u_n)_{n\in\N}$ in $H^1(\R^2)$ such that:
\begin{compactenum}[(a)]
\item $\mathcal{I}(u_n)\us{n \to \infty}\to b$;
\item $\mathcal{I}'(u_n)\us{n \to \infty}\to 0$ strongly in $H^1(\R^2)'$;
\item $\mathcal{P}(u_n)\us{n \to \infty}\to 0$.
\end{compactenum}
\end{proposition}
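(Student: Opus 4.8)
The plan is to use the scaling trick of Jeanjean, exploiting that $\mathcal{I}$ transforms in a controlled way under the dilations $v\mapsto v(\cdot/t)$. Parametrising $t=e^\sigma$ (so that the dilation stays continuous in the parameter, the map $t\mapsto v(\cdot/t)$ being problematic only at $t=0$, which $e^\sigma$ never reaches), the conformal invariance of the Dirichlet integral in dimension two and the $(\alpha-2)$-homogeneity of $I_\alpha$ give
\[
\mathcal{I}\bigl(v(\cdot/e^\sigma)\bigr)=\frac12\int_{\R^2}|\nabla v|^2+\frac{e^{2\sigma}}2\int_{\R^2}|v|^2-\frac{e^{(2+\alpha)\sigma}}2\int_{\R^2}\bigl(I_\alpha*F(v)\bigr)F(v).
\]
Differentiating in $\sigma$ and undoing the dilation shows that the $\sigma$-derivative of the right-hand side equals $\mathcal{P}(v(\cdot/e^\sigma))$. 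Thus the Poho\v zaev functional $\mathcal{P}$ is precisely the derivative of $\mathcal{I}$ in the scaling direction, and a Palais--Smale sequence for the augmented functional obtained by freeing this direction will automatically satisfy the Poho\v zaev identity asymptotically.

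First I would introduce the augmented functional $\mathcal{J}:\R\times H^1(\R^2)\to\R$, $\mathcal{J}(\sigma,v):=\mathcal{I}(v(\cdot/e^\sigma))$, given by the displayed formula. Since $\mathcal{I}$ is of class $C^1$ on $H^1(\R^2)$ and the parameter enters only through the smooth factors $e^{2\sigma}$ and $e^{(2+\alpha)\sigma}$, the functional $\mathcal{J}$ is $C^1$ on the Hilbert space $\R\times H^1(\R^2)$. Next I would check the mountain-pass geometry of $\mathcal{J}$: by $(F_2)$ together with Propositions~\ref{mt} and~\ref{hls} the nonlocal term is of higher order near the origin, so $0$ is a strict local minimum, while $(F_0)$ (taking $v$ a plateau at height $s_0$ forces $\int_{\R^2}(I_\alpha*F(v))F(v)>0$) makes the factor $e^{(2+\alpha)\sigma}$ dominate as $\sigma\to+\infty$, producing a point of negative energy. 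Setting
\[
\tilde b:=\inf_{\tilde\gamma\in\tilde\Gamma}\max_{t\in[0,1]}\mathcal{J}(\tilde\gamma(t)),\qquad \tilde\Gamma:=\bigl\{\tilde\gamma\in C([0,1],\R\times H^1(\R^2))\st\tilde\gamma(0)=(0,0),\ \mathcal{J}(\tilde\gamma(1))<0\bigr\},
\]
I would verify $\tilde b=b$: every $\gamma\in\Gamma$ lifts to $(0,\gamma)\in\tilde\Gamma$ with the same maximal energy, and conversely every $\tilde\gamma=(\sigma,w)\in\tilde\Gamma$ projects to $t\mapsto w(t)(\cdot/e^{\sigma(t)})\in\Gamma$ with the same maximal energy.

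The heart of the argument is then a localized minimax principle. I would choose paths $\gamma_n\in\Gamma$ with $\max_t\mathcal{I}(\gamma_n(t))\to b$ and lift them to $\tilde\gamma_n=(0,\gamma_n)\in\tilde\Gamma$, whose $\sigma$-component vanishes identically. The quantitative form of the mountain-pass theorem, which produces almost-critical points near nearly optimal paths, then yields a sequence $(\sigma_n,v_n)$ with $\mathcal{J}(\sigma_n,v_n)\to b$, $\mathcal{J}'(\sigma_n,v_n)\to0$ in $(\R\times H^1(\R^2))'$, and $\mathrm{dist}((\sigma_n,v_n),\tilde\gamma_n([0,1]))\to0$; since the $\sigma$-component of each $\tilde\gamma_n$ is $0$, the last property forces $\sigma_n\to0$. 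Putting $u_n:=v_n(\cdot/e^{\sigma_n})$, assertion (a) is immediate from $\mathcal{I}(u_n)=\mathcal{J}(\sigma_n,v_n)$, and assertion (c) follows from $\mathcal{P}(u_n)=\partial_\sigma\mathcal{J}(\sigma_n,v_n)\to0$. For (b) the chain rule gives $\mathcal{I}'(u_n)[\phi]=\partial_v\mathcal{J}(\sigma_n,v_n)[\phi(e^{\sigma_n}\cdot)]$ for every $\phi\in H^1(\R^2)$, and a change of variables yields $\|\phi(e^{\sigma_n}\cdot)\|_{H^1}^2=\int_{\R^2}|\nabla\phi|^2+e^{-2\sigma_n}\int_{\R^2}|\phi|^2$; as $\sigma_n\to0$ the factor $e^{-2\sigma_n}$ stays bounded, so $\|\mathcal{I}'(u_n)\|_{H^1(\R^2)'}\le C\|\partial_v\mathcal{J}(\sigma_n,v_n)\|_{H^1(\R^2)'}\to0$.

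I expect the delicate point to be precisely this transfer of the Palais--Smale condition from $\mathcal{J}$ on $\R\times H^1(\R^2)$ back to $\mathcal{I}$ on $H^1(\R^2)$. The dilation relating $u_n$ and $v_n$ distorts the $H^1$-norm by a factor $e^{\pm\sigma_n}$, so the estimate for $\mathcal{I}'(u_n)$ collapses unless the scaling parameter is prevented from drifting to $-\infty$, that is, unless one rules out the concentrating regime $e^{\sigma_n}\to0$. Anchoring the deformation to the lifted paths $(0,\gamma_n)$, so that $\sigma_n\to0$, is what makes the distortion harmless; the scaling computations themselves and the verification of the mountain-pass geometry are routine by comparison.
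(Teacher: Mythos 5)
Your proposal follows essentially the same route as the paper's proof: Jeanjean's scaling trick with the augmented functional $\tilde{\mathcal{I}}(\sigma,v)=\mathcal{I}\bigl(v(e^{-\sigma}\cdot)\bigr)$, the identification of the two mountain-pass levels, and the quantitative minimax principle applied near lifted paths $(0,\gamma_n)$ so that $\sigma_n\to 0$ and the norm distortion $e^{-2\sigma_n}$ in transferring $\mathcal{I}'(u_n)\to 0$ stays bounded — a point the paper leaves implicit in its citation of \cite{wil}*{Theorem 2.9} but which you correctly spell out as the crux. One small correction: the strict local minimum at the origin must be deduced from $(F_1)$ alone (via the integrated bound \eqref{f} and the $\min\bigl\{1,|s|^{1+\frac{\alpha}{2}}\bigr\}$ growth of $F$, as in Lemma~\ref{bb}), not from $(F_2)$, which is not among the hypotheses of Proposition~\ref{pps}.
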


To prove Proposition~\ref{pps}, we first need to show that the energy functional $\mathcal{I}$ has the mountain pass geometry, namely that the mountain pass level $b$ is well-defined and nontrivial:

\begin{lemma}\label{bb}
The critical level $b$ defined by \eqref{b} satisfies $b\in(0,+\infty)$.
\end{lemma}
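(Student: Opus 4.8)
To prove $b > 0$, I would establish that the origin is a strict local minimum of $\mathcal{I}$ with positive energy on a small sphere in $H^1(\R^2)$; to prove $b < +\infty$, I would exhibit a single admissible path $\gamma \in \Gamma$, which reduces to finding one function $w$ with $\mathcal{I}(w) < 0$ and then connecting $0$ to $w$ with controlled energy.

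\textbf{Lower bound $b>0$.} First I would show there exist $\rho, \delta > 0$ such that $\mathcal{I}(u) \ge \delta$ whenever $\|u\|_{H^1} = \rho$. The quadratic part $\frac12\|u\|_{H^1}^2$ is exactly $\frac{\rho^2}{2}$ on this sphere, so it suffices to bound the nonlocal term $\int_{\R^2}(I_\alpha * F(u))F(u)$ from above by something like $o(\rho^2)$ as $\rho \to 0$. Using $(F_1)$ together with $(F_2)$, for small $|s|$ one has $|F(s)| \lesssim |s|^{1+\alpha/2}$ (from $(F_2)$), while for large $|s|$ one uses the exponential bound from $(F_1)$ and \eqref{f}. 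The natural estimate is via Hardy--Littlewood--Sobolev (Proposition~\ref{hls}): taking $p = 4/(2+\alpha)$, one gets
\[
 \Bigl|\int_{\R^2}\bigl(I_\alpha * F(u)\bigr)F(u)\Bigr| \le C\,\|F(u)\|_{L^{4/(2+\alpha)}}^2.
\]
Then I would estimate $\|F(u)\|_{L^{4/(2+\alpha)}}$ by splitting into the region where $|u|$ is small and where $|u|$ is large, applying $(F_2)$ on the former and the Moser--Trudinger inequality (Proposition~\ref{mt}) on the latter once $\|\nabla u\|_{L^2}$ is small. The upshot should be $\|F(u)\|_{L^{4/(2+\alpha)}} = o(\|u\|_{H^1})$ as $\|u\|_{H^1}\to 0$, so the nonlocal term is $o(\rho^2)$ and hence $\mathcal{I}(u) \ge \frac{\rho^2}{2} - o(\rho^2) \ge \delta > 0$ for $\rho$ small. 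Since every $\gamma \in \Gamma$ starts at $\gamma(0)=0$ and ends at $\mathcal{I}(\gamma(1))<0$, by continuity $\gamma$ must cross the sphere $\|u\|_{H^1}=\rho$, whence $\sup_{t}\mathcal{I}(\gamma(t)) \ge \delta$; taking the infimum over $\gamma$ gives $b \ge \delta > 0$.

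\textbf{Upper bound $b<+\infty$.} Here I must produce at least one path in $\Gamma$, and for this I first need $\Gamma \neq \emptyset$, i.e.\ some $w$ with $\mathcal{I}(w)<0$. This is exactly where $(F_0)$ enters. Using $F(s_0)\neq 0$, I would build a test function concentrating near the value $s_0$ so that the nonlocal term $\frac12\int(I_\alpha*F(u))F(u)$ becomes large and positive, dominating the quadratic part. Concretely, take a fixed profile (say $u = s_0$ on a large ball, smoothly cut off) and use a scaling $u_\lambda(x) = w(x/\lambda)$: under this dilation the gradient term $\frac12\int|\nabla u_\lambda|^2$ stays bounded (it is scale-invariant in $\R^2$), the mass term $\frac12\int|u_\lambda|^2$ grows like $\lambda^2$, while the nonlocal term scales like $\lambda^{2+\alpha}$ and hence eventually dominates, forcing $\mathcal{I}(u_\lambda)\to -\infty$. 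Then the straight-line (or reparametrized dilation) path from $0$ to such a $u_\lambda$ with $\lambda$ large gives an element of $\Gamma$ with finite supremal energy along it, so $b<+\infty$.

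\textbf{Main obstacle.} I expect the delicate point to be the construction of the descending path and the verification $b<\infty$, rather than the coercivity near $0$. As the introduction itself flags, in dimension $N=2$ the dilation $t\mapsto w(\cdot/t)$ is \emph{not} continuous at $t=0$ in $H^1(\R^2)$, so one cannot naively use $\gamma(t)=w(\cdot/t)$ as a path anchored at the origin; a more careful reparametrization or a two-stage path (first scale up mass, then dilate) is needed to keep the path continuous while still reaching negative energy. Ensuring the sign of the nonlocal term is genuinely positive also requires care, since $F$ is only assumed nonzero at one point and may change sign; the concentration argument must arrange that $F(u)$ has a definite sign on the bulk of the support so that $(I_\alpha * F(u))F(u) \ge 0$ there. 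Getting these two scalings to cooperate is the technically sensitive part of the argument.
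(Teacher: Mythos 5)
Your proposal is correct and follows essentially the same route as the paper: for $b>0$, local coercivity of $\mathcal{I}$ near the origin via Hardy--Littlewood--Sobolev with exponent $4/(2+\alpha)$ plus Moser--Trudinger (so the nonlocal term is $O(\|u\|_{H^1}^{2+\alpha})=o(\|u\|_{H^1}^2)$) and an intermediate-value crossing argument; for $b<+\infty$, a function with positive nonlocal term built from $(F_0)$, driven to negative energy by the dilation $x\mapsto x/\lambda$ since the $\lambda^{2+\alpha}$ nonlocal term beats the $\lambda^2$ mass term while the gradient term is scale-invariant in $\R^2$. Two small remarks: the sign issue you flag is settled in the paper exactly in the spirit you suggest, by evaluating the nonlocal term at $s_0\mathbf{1}_{B_1}$, where it equals $F(s_0)^2\int_{B_1}\int_{B_1}I_\alpha(x-y)\,\mathrm{d}x\,\mathrm{d}y>0$ regardless of the sign of $F(s_0)$, and then passing to a nearby smooth $H^1$ function by density; and $(F_2)$ is not actually needed for the lower bound (the paper uses only $(F_1)$, whose integration already gives $|F(s)|\lesssim |s|^{1+\alpha/2}$ near $0$), which matters because Proposition~\ref{pps} invokes this lemma under the hypotheses $(F_0)$ and $(F_1)$ alone.
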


\begin{proof}
We start by showing the finiteness of $b$, which will be done as in \cite{mv15}*{Proposition 2.1}.
By the definition of the set $b$, it is sufficient to show that $\Gamma \ne\es$, which in turn is equivalent to find $u_0 \in H^1 (\R^2)$ such that $\mathcal{I}(u_0)<0$.
By the assumption $(F_0)$, we can take $s_0$ such that $F(s_0)\ne0$ and we find
$$\int_{\R^2}\bigl(I_\alpha*F(s_0\mbf1_{B_1})\bigr)F(s_0\mbf1_{B_1})=F(s_0)^2\int_{B_1}\int_{B_1}I_\alpha(x-y)\,\mrm dx\,\mrm dy>0;$$
therefore by density of smooth functions in \(L^q (\R^2)\) there will be $v_0\in H^1(\R^2)$ with $\int_{\R^2}(I_\alpha*F(v_0))F(v_0)>0$. We consider now, for $t>0$, the function \(v_t : \R^2 \to \R\) defined for \(x \in \R^2\) by $\ts v_t(x):=v_0\left(\frac{x}t\right)$. This function verifies
$$
\mathcal{I}(v_t)=\frac{1}{2} \int_{\R^2}|\nabla v_0|^2+\frac{t^2}2\int_{\R^2}|v_0|^2-\frac{t^{2+\alpha}}2\int_{\R^2}(I_\alpha*F(v_0))F(v_0),
$$
therefore, for some $t_0\gg0$, the function $u_0:=v_{t_0}$ satisfies $\mathcal{I}(u_0)<0$.

Let us now show that $b>0$. By the definition of $b$, it is equivalent to show that there exists \(\varepsilon > 0\) such that for every path $\gamma\in\G$ there exists $t_\gamma\in[0,1]$ with $\mathcal{I}(\gamma(t_\g))\ge\e>0$.
We first assume that $u\in H^1(\R^2)$ and $\int_{\R^2}\left(|\nabla u|^2+|u|^2\right)\le\d\ll1$. 
In particular, since $\int_{\R^2}|\nabla u|^2\le 1$, Proposition~\ref{mt} applies to $u$ with $\beta=2\pi$.
Therefore, by Propositions~\ref{hls}, and~\ref{mt} and by \eqref{f}, we have
\begin{equation*}
\begin{split}
\int_{\R^2}\bigl(I_\alpha*F(u)\bigr)F(u)&\le C\left(\int_{\R^2}|F(u)|^\frac{4}{2+\alpha}\right)^{1+\frac{\alpha}2}
\le C\left(\int_{\R^2}\min\bigl\{1,|u|^2\bigr\}e^{2\pi|u|^2}\right)^{1+\frac{\alpha}2}\\
  &\le C\left(\int_{\R^2}|u|^2\right)^{1+\frac{\alpha}2},
\end{split}
\end{equation*}
which is smaller than $\ts\frac{1}4\int_{\R^2}\left(|\nabla u|^2+|u|^2\right)$ if $\d$ is small enough. It follows then that if 
\(\int_{\R^2}\left(|\nabla u|^2+|u|^2\right)\le\d\), we have 
\[
\mathcal{I}(u)\ge\frac{1}4\int_{\R^2}\bigl(|\nabla u|^2+|u|^2\bigr). 
\]
We now take an arbitrary path $\gamma\in\G$. Since $\ts\mathcal{I}(\gamma(1))<0<\frac{1}4\int_{\R^2}\left(|\n\gamma(t_\g)|^2+|\gamma(t_\g)|^2\right)$, we have 
$$
\int_{\R^2}\bigl(|\n\gamma(1)|^2+|\gamma(1)|^2\bigr)>\d>0=\int_{\R^2}\bigl(|\n\gamma(0)|^2+|\gamma(0)|^2\bigr);
$$
therefore, there exists $t_\g \in (0, 1)$ such that $\ts\int_{\R^2}\left(|\n\gamma(t_\g)|^2+|\gamma(t_\g)|^2\right)=\d$, and hence $\ts\mathcal{I}(\gamma(t_\g))\ge\frac{\d}4$. The lemma follows by taking $\ts\e:=\frac{\d}4$.
\end{proof}

\begin{proof}[Proof of Proposition~\ref{pps}]
We follow \citelist{\cite{jea}*{Chapter 2}\cite{hit}*{Chapter 4}\cite{mv15}*{Proposition 2.1}}.
We consider the map \(\Phi\) given by
\beqy
\Phi:\R\x H^1\bigl(\R^2\bigr)&\longrightarrow& H^1\bigl(\R^2\bigr)\\
(\s,v)&\longmapsto&\Phi(\s,v)(x):=v\left(e^{-\s}x\right)
\eeqy
and the functional $\Tilde{\mathcal{I}}=\mathcal{I} \circ \Phi$:
$$\Tilde{\mathcal{I}}(\s,v)=\mathcal{I}(\Phi(\s,u))=\frac{1}{2} \int_{\R^2}|\nabla v|^2+\frac{e^{2\s}}2\int_{\R^2}|v|^2-\frac{e^{(2+\a)\s}}2\int_{\R^2}(I_\alpha*F(v))F(v),$$
which is well-defined and Fr\'echet-differentiable on the Hilbert space $\R\x H^1(\R^2)$.

We define now the class of paths
$$\Tilde{\Gamma}:=\l\{\Tilde{\gamma}\in C\bigl([0,1],\R\x H^1\big(\R^2\bigr)\bigr)\st \Tilde{\gamma}(0)=(0,0) \text{ and } \Tilde{\mathcal{I}}(\Tilde{\gamma}(1))<0\r\};$$
since we have $\G=\bigl\{\Phi \circ \Tilde{\gamma}\st\Tilde{\gamma}\in\Tilde{\Gamma} \bigr\}$, the mountain pass levels of $\mathcal{I}$ and $\Tilde{\mathcal{I}}$ coincide, namely
$$b=\inf_{\Tilde{\gamma}\in\Tilde{\Gamma}}\sup_{t\in[0,1]}\Tilde{\mathcal{I}}(\Tilde{\gamma}(t)).$$
Since, by Lemma~\ref{bb}, the mountain pass level \(b\) is not trivial, we can thus apply the minimax principle (\cite{wil}, Theorem $2.9$) and we find a sequence $((\sigma_n,v_n))_{n\in\N}$ in \(\R\x H^1(\R^2)\) such that:
\begin{align*}
\Tilde{\mathcal{I}}(\sigma_n,v_n)&\us{n \to \infty}\to b&
&\text{ and }&
\Tilde{\mathcal{I}}(\sigma_n,v_n)&\us{n \to \infty}\to0 \text{ strongly in } \bigl(\R\x H^1(\R^2)\bigr)'.
\end{align*}
By writing explicitly the derivative of $\Tilde{\mathcal{I}}$:
$$\Tilde{\mathcal{I}}'(\sigma_n,v_n)[h,w]=\mathcal{I}'(\Phi(\sigma_n,v_n))[\Phi(\sigma_n,w)]+\mathcal{P}(\Phi(\sigma_n,v_n))h;$$
we see that the conclusion follows by taking $u_n=\Phi(\sigma_n,v_n)$.
\end{proof}

\section{Convergence of the Poho\v zaev--Palais--Smale sequence}
\label{sectionConvergence}

In this Section we will construct a nontrivial solution of \eqref{p} from the sequence given by Proposition~\ref{pps}.

\begin{proposition}\label{conv}
If the function $F\in C^1(\R,\R)$ satisfies $(F_1)$ and $(F_2)$ and the sequence $(u_n)_{n\in\N}$ in $H^1(\R^2)$ satisfies
\begin{compactenum}[(a)]
\item $\mathcal{I}(u_n)$ is uniformly bounded,
\item $\mathcal{I}'(u_n)\us{n \to \infty}\to 0$ strongly in $\left(H^1(\R^2)\right)'$,
\item $\mathcal{P}(u_n)\us{n \to \infty}\to 0$;
\end{compactenum}
then, up to subsequences, one of the following occurs:
\begin{itemize}
\item either $u_n\us{n \to \infty}\to0$ strongly in $H^1(\R^2)$;
\item or there exists $u\in H^1(\R^2)\setminus \{0\}$ solving \eqref{p} and a sequence $(x_n)_{n\in\N}$ in $\R^2$ such that $u_n(\cd-x_n)\us{n \to \infty}\wk u$ weakly in $H^1(\R^2)$.
\end{itemize}
\end{proposition}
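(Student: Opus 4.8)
The plan is to first establish boundedness of $(u_n)$ in $H^1(\R^2)$, and then to run a concentration--compactness dichotomy governed by the quantity $\sup_{y \in \R^2} \int_{B_1(y)} |u_n|^2$. For boundedness, write $D_n := \int_{\R^2}(I_\alpha * F(u_n))F(u_n)$; the definitions \eqref{iu} and \eqref{pu} give the algebraic identity $(2+\alpha)\mathcal{I}(u_n) - \mathcal{P}(u_n) = \frac{2+\alpha}{2}\int_{\R^2}|\nabla u_n|^2 + \frac{\alpha}{2}\int_{\R^2}|u_n|^2$, in which $D_n$ cancels exactly. Since the left-hand side is bounded by (a) and (c), and both coefficients on the right are strictly positive, $(u_n)$ is bounded in $H^1(\R^2)$. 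This is precisely where the Poho\v zaev information (c) enters, and it replaces an Ambrosetti--Rabinowitz condition. Passing to a subsequence, (b) and boundedness give $\langle\mathcal{I}'(u_n),u_n\rangle \to 0$.

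Suppose first the vanishing alternative $\lim_n \sup_{y} \int_{B_1(y)}|u_n|^2 = 0$. I would show the nonlocal quantities $D_n$ and $\int(I_\alpha*F(u_n))F'(u_n)u_n$ both tend to $0$. By Proposition~\ref{hls} it suffices that $\|F(u_n)\|_{L^{4/(2+\alpha)}} \to 0$, the factors $\|F(u_n)\|_{L^{4/(2+\alpha)}}$ and $\|F'(u_n)u_n\|_{L^{4/(2+\alpha)}}$ being estimated identically. Splitting over $\{|u_n| \le \delta\}$ and $\{|u_n| > \delta\}$: on the first set $(F_2)$ gives $|F(u_n)|^{4/(2+\alpha)} \le \varepsilon |u_n|^2$, whose integral is uniformly small; on the second, the two-dimensional Lions lemma yields $|\{|u_n|>\delta\}| \to 0$ (vanishing forces $u_n \to 0$ in every $L^s$, $s>2$), and by Cauchy--Schwarz together with the bound $|F(u_n)|^{8/(2+\alpha)} \le C\min\{1,|u_n|^2\}e^{\beta|u_n|^2}$ and the Moser--Trudinger estimate (Proposition~\ref{mt}, with $\beta<4\pi$ admissible since $\int|\nabla u_n|^2$ is bounded) this part is $\le C|\{|u_n|>\delta\}|^{1/2} \to 0$. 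Hence $D_n \to 0$, so $\mathcal{P}(u_n)\to 0$ forces $\|u_n\|_{L^2} \to 0$; then $\langle\mathcal{I}'(u_n),u_n\rangle \to 0$ together with $\int(I_\alpha*F(u_n))F'(u_n)u_n\to 0$ forces $\|\nabla u_n\|_{L^2}\to 0$. Thus $u_n \to 0$ strongly in $H^1(\R^2)$.

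Otherwise, after passing to a subsequence there are $\delta_0 > 0$ and $x_n \in \R^2$ with $\int_{B_1(x_n)}|u_n|^2 \ge \delta_0$. By translation invariance of $\mathcal{I}$, $\mathcal{P}$ and the norms, the functions $\tilde u_n := u_n(\cdot - x_n)$ again satisfy (a), (b), (c) and are bounded, so up to a subsequence $\tilde u_n \wk u$ in $H^1(\R^2)$, with convergence in $L^2_{\mathrm{loc}}$ and a.e.; the local lower bound passes to $\int_{B_1}|u|^2 \ge \delta_0$, so $u \ne 0$. It remains to prove $\mathcal{I}'(u)=0$. The linear terms pass to the limit by weak convergence, so the only issue is, for $\varphi \in C_c^\infty(\R^2)$, the convergence $\int(I_\alpha*F(\tilde u_n))F'(\tilde u_n)\varphi \to \int(I_\alpha*F(u))F'(u)\varphi$. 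Here $F(\tilde u_n)$ is bounded in $L^{4/(2+\alpha)}$ and converges a.e. to $F(u)$, hence $F(\tilde u_n) \wk F(u)$ weakly there; since $I_\alpha*$ is bounded from $L^{4/(2+\alpha)}$ into $L^{4/(2-\alpha)}$ (Proposition~\ref{hls}), $I_\alpha*F(\tilde u_n) \wk I_\alpha*F(u)$ in $L^{4/(2-\alpha)}$. On the other hand $F'(\tilde u_n)\varphi \to F'(u)\varphi$ \emph{strongly} in the dual space $L^{4/(2+\alpha)}$ by Vitali's theorem, the uniform integrability coming from Proposition~\ref{mt} exactly as in the continuity proof for $\mathcal{E}$ above. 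Pairing the weak and strong convergences gives the claim, so $\langle\mathcal{I}'(u),\varphi\rangle = 0$ for all $\varphi$ and $u$ solves \eqref{p}.

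The delicate point, and the main obstacle, is this last passage to the limit in the nonlinear nonlocal term under mere weak convergence; both there and in the vanishing alternative the essential tool is the uniform integrability furnished by the Moser--Trudinger inequality (Proposition~\ref{mt}), which compensates for the lack of compactness of the exponential-type nonlinearity in dimension two.
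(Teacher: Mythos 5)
Your proof is correct, and its technical ingredients coincide with the paper's: the Poho\v{z}aev cancellation giving $H^1$-boundedness, the splitting of $F$ into a small part controlled by $(F_2)$ and a large part controlled by $(F_1)$ together with the Moser--Trudinger inequality (Proposition~\ref{mt}), Lions' vanishing lemma, the Hardy--Littlewood--Sobolev inequality (Proposition~\ref{hls}), and a weak--strong pairing to pass to the limit in the nonlocal term. What differs is the logical organization. You run an explicit vanishing/non-vanishing dichotomy on $\sup_{y}\int_{B_1(y)}|u_n|^2$ and, in the vanishing branch, prove the full strong convergence $u_n\to0$ in $H^1(\R^2)$ (first $\|F(u_n)\|_{L^{4/(2+\alpha)}}\to0$, then $\mathcal{P}(u_n)\to0$ kills the $L^2$ norm, then $\mathcal{I}'(u_n)[u_n]\to0$ kills the gradient). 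The paper argues in the contrapositive direction: it assumes the first alternative fails, deduces from $\mathcal{I}'(u_n)[u_n]\to0$ that $\int_{\R^2}(I_\alpha*F(u_n))F'(u_n)u_n$, and hence $\int_{\R^2}(I_\alpha*F(u_n))F(u_n)$, stays bounded away from zero, and feeds this into a \emph{quantitative} form of Lions' inequality (with $L^p$ mass on balls, $p>2$) to extract concentration points; it never needs to establish strong convergence to zero. Your route costs a little extra work in the vanishing branch but makes the concentration-compactness structure explicit and yields the strong convergence there as a by-product; the paper's route is more economical. The passage to the limit in the equation is the same weak--strong argument in both (you package the strong part via Vitali and the $L^{4/(2-\alpha)}$--$L^{4/(2+\alpha)}$ duality, the paper via strong $L^q_{\mathrm{loc}}$ convergence of $F'(u_n)$ and weak convergence of $I_\alpha*F(u_n)$ in $L^{4/(2-\alpha)}\cap L^\infty$).

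Two spots in your write-up should be tightened, though neither is a genuine gap. First, the factors $\|F(u_n)\|_{L^{4/(2+\alpha)}}$ and $\|F'(u_n)u_n\|_{L^{4/(2+\alpha)}}$ are \emph{not} ``estimated identically'': $(F_2)$ gives no smallness of $F'(s)s$ near $s=0$ (assumption $(F_1)$ only yields $|F'(s)s|\le C_\theta|s|^{1+\alpha/2}$ there), so the second factor cannot be shown to vanish; but your argument only needs it to be \emph{bounded}, which does follow from $(F_1)$ and Proposition~\ref{mt}, since the Hardy--Littlewood--Sobolev bound on the cross term is a product in which the other factor tends to zero. Second, Proposition~\ref{mt} requires $\int_{\R^2}|\nabla u|^2\le1$, so ``$\beta<4\pi$ admissible since $\int_{\R^2}|\nabla u_n|^2$ is bounded'' is not the right justification: one must apply the inequality to $u_n/\sqrt{C_0}$ with $C_0\ge\sup_n\int_{\R^2}|\nabla u_n|^2$, so that the admissible exponent on $|u_n|^2$ is only $\beta/C_0<4\pi/C_0$; this is harmless precisely because $(F_1)$ holds for \emph{every} $\theta>0$, so the exponent in the pointwise bound on $|F|^{8/(2+\alpha)}$ can be chosen below $4\pi/C_0$ --- which is exactly how the paper uses it (with $\beta=2\pi$, i.e.\ exponent $2\pi/C_0$).
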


We follow the strategy of \cite{mv15}*{Proposition 2.2}. Since the gradient does not appear in the Poho\v zaev identity \eqref{pu}, it will be more delicate to show that the nonlocal term does not vanish.

\begin{proof}[Proof of Proposition~\ref{conv}]
We assume that the first alternative does not hold, namely 
\[
  \liminf_{n \to \infty}\int_{\R^2}\bigl(|\nabla u_n|^2+|u_n|^2\bigr)>0.
\]
By writing for each \(n \in \N\)
$$
\frac{1}{2} \int_{\R^2}|\nabla u_n|^2+\frac{\alpha}{2(2+\a)}\int_{\R^2}|u_n|^2=\mathcal{I}(u_n)-\frac{\mathcal{P}(u_n)}{2+\alpha}
$$
we deduce that the sequence $(u_n)_{n\in\N}$ is bounded in the space $H^1(\R^2)$. Since $\mathcal{I}'(u_n)\to 0$ in $H^1 (\R^2)'$ as $n \to \infty$, we have $\mathcal{I}'(u_n)[u_n]\to 0$ as \(n \to \infty\), therefore
$$
  \int_{\R^2}\bigl(I_\alpha*F(u_n)\bigr)F'(u_n)u_n=\int_{\R^2}\bigl(|\nabla u_n|^2+|u_n|^2\bigr)-\mathcal{I}'(u_n)[u_n]\ge\frac{1}C.
$$
Taking $\ts C_0\ge\sup_{n\in\N}\int_{\R^2}\left(|\nabla u_n|^2+|u_n|^2\right)$, we can apply Proposition~\ref{mt} to $\ts\frac{1}{\sqrt{C_0}}u_n$ with $\beta=2\pi$ and we obtain for each \(n \in \N\)
$$
\int_{\R^2}\min\bigl\{1,u_n^2\bigr\}e^{\frac{2\pi}{C_0}|u_n|^2}\le C_{2\pi}\frac{\int_{\R^2}|u_n|^2}{C_0}\le C_{2\pi};
$$
moreover, we also have, as \(n \to \infty\),
\[
\begin{split}
\int_{\R^2}|u_n|^2&=\Bigl(1+\frac{\alpha}2\Bigr)\int_{\R^2}\bigl(I_\alpha*F(u_n)\bigr)F(u_n)+\mathcal{P}(u_n)\\
&=\Bigl(1+\frac{\alpha}2\Bigr)\int_{\R^2}\bigl(I_\alpha*F(u_n)\bigr)F(u_n)+o(1).
\end{split}
\]
Therefore, from Proposition~\ref{hls} and by \eqref{f} we get
\begin{equation}
\begin{split}
\frac{1}C&\le\int_{\R^2}\bigl(I_\alpha*F(u_n)\bigr)F'(u_n)u_n \le C\left(\int_{\R^2}|F(u_n)|^\frac{4}{2+\alpha}\int_{\R^2}\left(|F'(u_n)||u_n|\right)^\frac{4}{2+\alpha}\right)^\frac{2+\alpha}4\\
&\le C'\left(\int_{\R^2}\min\bigl\{1,|u_n|^2\bigr\}e^{\frac{2\pi}{C_0}|u_n|^2}\right)^{1+\frac{\alpha}2} \le C''\left(\int_{\R^2}|u_n|^2\right)^{1+\frac{\alpha}2}\\
&=C''\left(\Bigl(1+\frac{\alpha}2\Bigr)\int_{\R^2}\bigl(I_\alpha*F(u_n)\bigr)F(u_n)+o(1)\right)^{1+\frac{\alpha}2},
\end{split}
\end{equation}
namely $\ts\int_{\R^2}\bigl(I_\alpha*F(u_n)\bigr)F(u_n)$ is bounded from above from zero when \(n \to \infty\).

We now want to prove that $u_n$ does not vanish. We will use the following inequality \cite{lio}*{Lemma I.1} (see also \citelist{\cite{wil}*{lemma 
1.21}\cite{mv13}*{lemma 2.3}\cite{vs14}*{(2.4)}}):
$$\int_{\R^2}|u_n|^p\le C\int_{\R^2}\bigl(|\nabla u_n|^2+|u_n|^2\bigr)\left(\sup_{x\in\R^2}\int_{B_1(x)}|u_n|^p\right)^{1-\frac{2}p}$$
and we will show that the right-hand side term is bounded from below by a positive constant, for every $p>2$.
By the assumption $(F_2)$ and \eqref{f}, for every $\e>0$ there exists $C_{\e,\t}>0$ such that
$$|F(s)|^\frac{4}{2+\alpha}\le\e\min\bigl\{1,|s|^2\bigr\}e^{\theta|s|^2}+C_{\e,\t}|s|^p;$$
therefore
\begin{equation}
\begin{split}
\left(\sup_{x\in\R^2}\int_{B_1(x)}|u_n|^p\right)^{1-\frac{2}p}&\ge\frac{1}C\frac{\displaystyle\int_{\R^2}|u_n|^p}{\displaystyle\int_{\R^2}\bigl(|\nabla u_n|^2+|u_n|^2\bigr)}\\
&\ge\frac{1}{CC_0C_\e}\left(\int_{\R^2}|F(u_n)|^\frac{4}{2+\alpha}-\e\int_{\R^2}\min\bigl\{1,|u_n|^2\bigr\}e^{\frac{2\pi}{C_0}|u_n|^2}\right)\\
&\ge\frac{1}{C'_\e}\left(\left(\int_{\R^2}\bigl(I_\alpha*F(u_n)\bigr)F(u_n)\right)^\frac{2}{2+\alpha}-\e C\int_{\R^2}|u_n|^2\right)\\
&\ge\frac{1}{C'_\e}\left(\frac{1}{C'}-\e CC_0\right).
\end{split}
\end{equation}
The quantity $\e$ being arbitrary, we get $\ts\int_{B_1(x_n)}|u_n|^p\ge\frac{1}C$ for some $x_n\in\R^2$, for $n$ large enough.

We can now consider the translated sequence $(u_n(\cd-x_n))_{n \in \N}$. Since the problem \eqref{p} is invariant by translation, this sequence will satisfy the hypotheses of the present proposition, hence we will still denote it as $(u_n)_{n \in \N}$ and we will assume that $x_n=0$ for all $n\in\N$.
Since $\liminf_{n \to \infty}\int_{B_1}|u_n|^p>0$, we can assume that this sequence \((u_n)_{n \in \N}\) converges weakly to $u\in H^1 (\R^2)\setminus \{0\}$. We just have to show that $u$ solves \eqref{p}.

The sequence $(u_n)_{n\in\N}$ being bounded in $H^1(\R^2)$, the sequence $(F(u_n))_{n\in\N}$ is bounded in $L^p(\R^2)$ for every $\ts p\ge\frac{4}{2+\alpha}$. 
Moreover, up to subsequences, $u_n\to u$ almost everywhere as \(n \to \infty\), so by the continuity of the function $F$ we also have $F(u_n)\to F(u)$ almost everywhere as \(n \to \infty\); this implies that $F(u_n)\wk F(u)$ weakly in $L^p(\R^2)$ for every such $p$ as \(n \to \infty\). 
Since $\ts\frac{2}\a>\frac{4}{2+\alpha}$, by Propositions~\ref{hls} and~\ref{linf} we get $I_\alpha*F(u_n)\wk I_\alpha*F(u)$ weakly in $L^{{4}/{(2-\a)}}(\R^2)\cap L^\infty (\R^2)$ as \({n \to \infty}\).
By the condition $(F_1)$ and Proposition~\ref{mt}, the sequence $(F'(u_n))_{n \in \N}$ is bounded in $L^p(\R^2)$ for every $\ts p\in [\frac{2}\alpha,+\infty)$, and by continuity $F'(u_n)\to F'(u)$ almost everywhere as \(n \to \infty\); therefore, $F'(u_n)\to F'(u)$ strongly in $L^q_{\mathrm{loc}}(\R^2)$ for every $q\in[1,+\infty)$ as $n \to \infty$, hence
$$\bigl(I_\alpha*F(u_n)\bigr)F'(u_n)\us{n \to \infty}\wk\bigl(I_\alpha*F(u)\bigr)F'(u)\quad\quad\quad\text{in }L^r_{\mathrm{loc}}(\R^2)\quad\fa\,r\in\l[1,+\infty\right).$$
Therefore, for every $\varphi\in C^1_0(\R^2)$,
\begin{equation}
\begin{split}
\int_{\R^2}(\nabla u\cdot \nabla \varphi+u\varphi)&=\lim_{n \to \infty}\int_{\R^2}(\nabla u_n\cdot \nabla \varphi+u_n\varphi)\\
&=\lim_{n \to \infty}\int_{\R^2}\bigl(I_\alpha*F(u_n)\bigr)F'(u_n)\varphi =\int_{\R^2}\bigl(I_\alpha*F(u)\bigr)F'(u)\varphi,
\end{split}
\end{equation}
namely $u$ solves the Choquard equation \eqref{p}.
\end{proof}

\begin{corollary}\label{conv2}
If $F$ satisfies the conditions $(F_0)$, $(F_1)$ and $(F_2)$, then problem \eqref{p} has a nontrivial solution $u \in H^1 (\R^2)$.
\end{corollary}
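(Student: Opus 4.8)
The plan is to assemble the two main propositions of the preceding sections, with the positivity of the mountain-pass level serving as the mechanism that rules out the trivial outcome. First I would invoke Proposition~\ref{pps}: since $F$ satisfies $(F_0)$ and $(F_1)$, there exists a Poho\v zaev--Palais--Smale sequence $(u_n)_{n \in \N}$ in $H^1(\R^2)$ at the level $b$, that is, a sequence with $\mathcal{I}(u_n) \to b$, $\mathcal{I}'(u_n) \to 0$ strongly in $H^1(\R^2)'$, and $\mathcal{P}(u_n) \to 0$.

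Next I would check that this sequence meets the hypotheses of Proposition~\ref{conv}. The convergence $\mathcal{I}(u_n) \to b$ makes $(\mathcal{I}(u_n))_{n \in \N}$ a convergent, hence bounded, sequence, yielding condition (a); conditions (b) and (c) are carried over verbatim from Proposition~\ref{pps}. As $F$ moreover satisfies $(F_2)$, Proposition~\ref{conv} applies and produces the dichotomy: either $u_n \to 0$ strongly in $H^1(\R^2)$, or a suitably translated subsequence converges weakly to some $u \in H^1(\R^2) \setminus \{0\}$ solving \eqref{p}.

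The crux is then to exclude the vanishing alternative, and this is where Lemma~\ref{bb} enters. If $u_n \to 0$ strongly in $H^1(\R^2)$, then by the continuity of $\mathcal{I}$ established in Section~\ref{sectionPreliminaries} we would have $\mathcal{I}(u_n) \to \mathcal{I}(0)$. Observe that $(F_2)$ forces $F(0) = 0$, since $F(s)/|s|^{1 + \alpha/2} \to 0$ as $s \to 0$ gives $F(s) \to 0$ and hence $F(0) = 0$ by continuity; consequently $\mathcal{I}(0) = 0$. This would entail $\mathcal{I}(u_n) \to 0$, contradicting $\mathcal{I}(u_n) \to b$ together with $b > 0$ from Lemma~\ref{bb}. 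Therefore the first alternative cannot occur, and the second one furnishes the nontrivial solution $u \in H^1(\R^2) \setminus \{0\}$ of \eqref{p}.

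Since every ingredient is already in place, I do not expect a substantial obstacle here; this statement is essentially a bookkeeping corollary. The only point requiring genuine care is the identification $\mathcal{I}(0) = 0$, which rests on extracting $F(0) = 0$ from the subcriticality assumption $(F_2)$, so that the strict positivity $b > 0$ indeed rules out vanishing and leaves the nontrivial branch of the dichotomy as the only possibility.
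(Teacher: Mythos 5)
Your proposal is correct and follows essentially the same route as the paper: invoke Proposition~\ref{pps} to get a Poho\v zaev--Palais--Smale sequence at level $b$, feed it into Proposition~\ref{conv}, and use $b>0$ from Lemma~\ref{bb} together with the continuity of $\mathcal{I}$ to rule out the vanishing alternative. Your extra observation that $(F_2)$ forces $F(0)=0$, so that $\mathcal{I}(0)=0$, is a point the paper leaves implicit, and it is a legitimate (indeed slightly more careful) justification of the contradiction step.
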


\begin{proof}
By Proposition~\ref{pps}, $\mathcal{I}$ admits a Poho\v zaev--Palais--Smale sequence $(u_n)_{n\in\N}$ at the level $b$.
We apply Proposition~\ref{conv} to $(u_n)_{n\in\N}$. If the first alternative occurred, then we would have $\mathcal{I}(u_n)\to\mathcal{I}(0)=0$ as \(n \to \infty\), in contradiction with Lemma~\ref{bb}. Therefore, the second alternative must occur, and in particular we get a solution $u\in H^1(\R^2)\setminus \{0\}$ of \eqref{p}.
\end{proof}

\section{From solutions to groundstates}
\label{sectionGroundstates}

We start by providing a local regularity result for solution of \eqref{p}. 
This result can be obtained quite directly because our growth assumption $(F_1)$ gives a good control on $I_\alpha*F(u)$ which, in turn, permits to apply a standard bootstrap method. The equivalent result in higher dimension $N\ge3$ is more delicate to prove (see \cite{mv15}*{Theorem 2}) because of the relative weakness of assumption $(F_1')$.

\begin{proposition}\label{reg}
If $F\in C^1(\R,\R)$ satisfies the condition $(F_1)$ and if the function $u\in H^1(\R^2)$ solves  the problem \eqref{p}, then $u\in W^{2,p}_{\mathrm{loc}}(\R^2)$ for every $p \ge 1$.
\end{proposition}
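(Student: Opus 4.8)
The plan is to write the equation in the form $-\Delta u + u = g$ with $g := (I_\alpha * F(u))\,F'(u)$, to show that $g$ enjoys high local integrability, and then to invoke interior elliptic estimates. What makes this possible essentially in a single pass, rather than through a long iteration, is that $(F_1)$ forces the nonlocal factor $I_\alpha * F(u)$ to be bounded.

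First I would prove that $I_\alpha * F(u) \in L^\infty(\R^2)$. Integrating $F'$ and using $(F_1)$ yields the pointwise bound $|F(s)| \le C_\theta |s| \min\{1,|s|^{\alpha/2}\} e^{\theta|s|^2}$ for every $\theta>0$. From this, on the one hand $F(u) \in L^{4/(2+\alpha)}(\R^2)$, and since $\alpha<2$ one has $4/(2+\alpha)<2/\alpha$. On the other hand, splitting $\R^2$ into $\{|u|\le1\}$ and $\{|u|>1\}$, using that $u \in L^q(\R^2)$ for every $q\in[2,\infty)$ on the first set and the Moser--Trudinger inequality (Proposition~\ref{mt}) with $\theta$ chosen small enough that $\theta\int_{\R^2}|\nabla u|^2$ lies below the admissible threshold on the second set, I would obtain $F(u)\in L^q(\R^2)$ for every $q\in[2,\infty)$, in particular for some $q>2/\alpha$. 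Proposition~\ref{linf} then gives $I_\alpha * F(u)\in L^\infty(\R^2)$.

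Next I would record, exactly as in the continuity argument for the superposition operator $\mathcal{E}$, that $F'(u)\in L^p(\R^2)$ for every $p\in[4/\alpha,\infty)$: raising $(F_1)$ to the power $p$ gives $|F'(u)|^p\le C_\theta^p\min\{1,|u|^{\alpha p/2}\}e^{p\theta|u|^2}$, whose right-hand side is integrable because for $p\ge4/\alpha$ one has $\alpha p/2\ge2$, so that $\min\{1,|u|^{\alpha p/2}\}\le\min\{1,|u|^2\}$ and Proposition~\ref{mt} applies after choosing $\theta$ small. Consequently $g=(I_\alpha * F(u))F'(u)$ is the product of a bounded function and an $L^p$ function, whence $g\in L^p(\R^2)$ for every $p\in[4/\alpha,\infty)$.

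Finally, I would apply the interior $L^p$ (Calder\'on--Zygmund) estimates for the operator $-\Delta+1$ to $-\Delta u+u=g$. Since $u\in H^1(\R^2)\subset L^p_{\mathrm{loc}}(\R^2)$ for every $p$ and $g\in L^p_{\mathrm{loc}}(\R^2)$ for every $p\ge4/\alpha$, these estimates give $u\in W^{2,p}_{\mathrm{loc}}(\R^2)$ for every $p\ge4/\alpha$; as $L^q\subset L^p$ on bounded sets when $q\ge p$, this covers every $p\ge1$. The only genuinely delicate step is the $L^\infty$ bound on $I_\alpha * F(u)$, which is what collapses the usual iterated bootstrap into a single step; it rests on the high-integrability estimate $F(u)\in L^q$ for some $q>2/\alpha$, and hence on choosing the free parameter $\theta$ in $(F_1)$ small relative to $\|\nabla u\|_{L^2(\R^2)}^2$ so that the Moser--Trudinger inequality is available.
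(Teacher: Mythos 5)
Your proposal is correct and takes essentially the same route as the paper's own proof: both establish $I_\alpha*F(u)\in L^\infty(\R^2)$ by combining $F(u)\in L^p\cap L^q$ for exponents $p<\frac{2}{\alpha}<q$ (obtained from $(F_1)$, i.e.\ \eqref{f}, together with the Moser--Trudinger inequality of Proposition~\ref{mt}) with Proposition~\ref{linf}, then bound the right-hand side of the equation by $C|F'(u)|\in L^p_{\mathrm{loc}}(\R^2)$ for every $p$ and invoke standard interior Calder\'on--Zygmund estimates. The extra details you supply (the pointwise bound on $F$ and the splitting of $\R^2$ into $\{|u|\le1\}$ and $\{|u|>1\}$) merely expand steps the paper leaves implicit.
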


\begin{proof}
By \eqref{f} and Lemma~\ref{mt} we deduce that if $v\in H^1(\R^2)$ then $F(v)\in L^p(\R^2)$ for every $\ts p\ge\frac{4}{2+\alpha}$. Since $\ts\frac{2}\a>\frac{4}{2+\alpha}$, by Proposition~\ref{linf} inequality we get $I_\alpha*F(v)\in L^\infty(\R^2)$.\\
Therefore, any solution $u$ of \eqref{p} verifies
$$\lvert -\Delta u+u\rvert \le C|F'(u)|,$$
with $F'(u)\in L^p_{\mathrm{loc}}(\R^2)$ for every $p \ge 1$ because of $(F_1)$. By standard (interior) regularity theory on bounded domains (see for example \cite{gt}*{Chapter 9}) we deduce that $u\in W^{2,p}_{\mathrm{loc}}(\R^2)$.
\end{proof}

The extra regularity just proved allows to prove that solutions of \eqref{p} satisfy the Poho\v zaev identity \eqref{pu}. 
The proof of the Poho\v zaev identity is classical and it is based on testing \eqref{p} against a suitable  cut-off of $x\cdot \nabla u(x)$, therefore it will be skipped. Details can be found in \cite{mv15}*{Theorem 3}.

\begin{proposition}[Poho\v zaev identity]\label{poho}
If $F\in C^1(\R,\R)$ satisfies $(F_1)$ and $u\in H^1(\R^2)\cap W^{2,2}_{\mathrm{loc}}(\R^2)$ solves \eqref{p}, then
$$\mathcal{P}(u)=\int_{\R^2}|u|^2-\left(1+\frac{\alpha}2\right)\int_{\R^2}\bigl(I_\alpha*F(u)\bigr)F(u)=0.$$
\end{proposition}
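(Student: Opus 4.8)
The plan is to test the equation \eqref{p} against a truncation of the Pohožaev multiplier $x\cdot\nabla u$ and then to let the truncation disappear. Concretely, I fix a cut-off $\varphi\in C_0^\infty(\R^2)$ with $\varphi\equiv1$ on $B_1$ and $\mathrm{supp}\,\varphi\subset B_2$, and set $\varphi_R(x):=\varphi(x/R)$. The regularity provided by Proposition~\ref{reg} gives $u\in W^{2,p}_{\mathrm{loc}}(\R^2)$ for every $p$, so that $\varphi_R\,(x\cdot\nabla u)\in H^1(\R^2)$ has compact support and is an admissible test function in the weak formulation of \eqref{p}. Testing against it yields
\[
\int_{\R^2}\bigl(\nabla u\cdot\nabla(\varphi_R\,(x\cdot\nabla u))+u\,\varphi_R\,(x\cdot\nabla u)\bigr)=\int_{\R^2}(I_\alpha*F(u))F'(u)\,\varphi_R\,(x\cdot\nabla u),
\]
and the whole proof consists in identifying the limit of each side as $R\to\infty$.

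For the local (left-hand) terms I would use the classical Rellich--Pohožaev integration by parts. The zeroth-order term gives, after one integration by parts, $\int u\,(x\cdot\nabla u)=-\tfrac{N}2\int u^2=-\int_{\R^2}u^2$ in the limit (here $N=2$), while the gradient term produces the factor $\tfrac{N-2}2\int|\nabla u|^2$, which vanishes precisely because $N=2$; this is where the two-dimensional setting simplifies matters. All the commutator and boundary contributions carry a factor $x\cdot\nabla\varphi_R$, which is bounded and supported in the annulus $B_{2R}\setminus B_R$, so they are controlled by $\int_{B_{2R}\setminus B_R}(|\nabla u|^2+|u|^2)$ and tend to $0$ since $u\in H^1(\R^2)$.

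The delicate point is the nonlocal (right-hand) term. Writing $F'(u)(x\cdot\nabla u)=x\cdot\nabla(F(u))$ and integrating by parts to move the derivative onto $(I_\alpha*F(u))\,\varphi_R\,x$ produces three contributions: a divergence term $N\int F(u)(I_\alpha*F(u))\varphi_R$, a boundary term involving $x\cdot\nabla\varphi_R$ (negligible as above, using that $(I_\alpha*F(u))F(u)\in L^1(\R^2)$, which follows as in the proof that $\mathcal{I}$ is well defined from \eqref{f}, the Moser--Trudinger inequality (Proposition~\ref{mt}) and the Hardy--Littlewood--Sobolev inequality (Proposition~\ref{hls})), and the genuinely nonlocal term $\int F(u)\,\varphi_R\,(x\cdot\nabla(I_\alpha*F(u)))$. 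To treat the last one I would unfold the convolution into its double integral and symmetrize in the two variables; using that $\nabla I_\alpha$ is odd and that $I_\alpha$ is homogeneous of degree $-(N-\alpha)$, Euler's identity $z\cdot\nabla I_\alpha(z)=-(N-\alpha)I_\alpha(z)$ converts it into $-\tfrac{N-\alpha}2\int(I_\alpha*F(u))F(u)$. Collecting the divergence and nonlocal contributions gives, in the limit, $\int(I_\alpha*F(u))F'(u)(x\cdot\nabla u)=-\tfrac{N+\alpha}2\int(I_\alpha*F(u))F(u)$.

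Putting the two sides together yields $-\int_{\R^2}|u|^2=-\bigl(1+\tfrac{\alpha}2\bigr)\int_{\R^2}(I_\alpha*F(u))F(u)$, which is exactly $\mathcal{P}(u)=0$. I expect the main obstacle to be the rigorous justification of the nonlocal computation: one must justify the integration by parts and the symmetrization directly at the level of the double integral, and verify that the truncation error terms attached to the nonlocal part genuinely vanish, which rests on the integrability estimates for $(I_\alpha*F(u))F(u)$ coming from $(F_1)$, \eqref{f}, and the Hardy--Littlewood--Sobolev and Moser--Trudinger inequalities.
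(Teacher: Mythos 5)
Your proposal is correct and coincides with the approach the paper itself indicates: the paper skips the proof of Proposition~\ref{poho}, describing it as the classical argument of testing \eqref{p} against a suitable cut-off of $x\cdot\nabla u$ and citing \cite{mv15}*{Theorem 3} for details, which is exactly what you carry out (with the right use of Proposition~\ref{reg} to make $\varphi_R\,(x\cdot\nabla u)$ admissible, the vanishing of the $\frac{N-2}{2}\int|\nabla u|^2$ term in dimension $2$, and the symmetrization/Euler-identity treatment of the Riesz term yielding $-\frac{N+\alpha}{2}\int(I_\alpha*F(u))F(u)$). The final identity and all limiting values match the paper's statement.
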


The Poho\v zaev identity allows us to show that the mountain pass solution is actually a groundstate. 
We will argue like \citelist{\cite{jt}*{Lemma 2.1}\cite{mv15}*{Proposition 2.1}}, associating to any solution $v$ a path $\gamma_v\in \G$ passing through $v$. The main difficulty here is that the integral of $|\nabla u|^2$ is invariant by dilation, therefore we are not allow to join $v$ with $0$ by just taking dilations $t\mapsto v\left(\frac\cdot t\right)$. To overcome this difficulty, we will combine properly dilatations and multiplication by constants \cite{jt}.

\begin{proposition}\label{path}
If $F\in C^1(\R,\R)$ satisfies $(F_1)$ and $v\in H^1(\R^2)\setminus \{0\}$ solves \eqref{p}, then there exists a path $\gamma_v\in C\left([0,1],H^1(\R^2)\right)$ such that:
\begin{compactenum}[(a)]
\item $\gamma_v(0)=0$;
\item $\gamma_v(1/2)=v$;
\item $\mathcal{I}(\gamma_v(t))<\mathcal{I}(v)$ for every $t\in[0,1]\sm \{1/2 \}$;
\item $\mathcal{I}(\gamma_v(1))<0$.
\end{compactenum}
\end{proposition}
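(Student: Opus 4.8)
The plan is to build \(\gamma_v\) from two elementary one-parameter deformations of \(v\): the dilations \(v_\tau(x) := v(x/\tau)\) for \(\tau > 0\), and the amplitude rescalings \(s \mapsto s\,w\) for \(s \in [0,1]\). Since \(v\) solves \eqref{p}, Propositions~\ref{reg} and~\ref{poho} give the Poho\v zaev identity \(\mathcal P(v) = 0\), that is \(\int_{\R^2}|v|^2 = (1+\frac\alpha2)\int_{\R^2}(I_\alpha*F(v))F(v)\). Substituting this into the explicit formula
$$
\mathcal I(v_\tau) = \frac12\int_{\R^2}|\nabla v|^2 + \frac{\tau^2}2\int_{\R^2}|v|^2 - \frac{\tau^{2+\alpha}}2\int_{\R^2}(I_\alpha*F(v))F(v),
$$
which uses the two-dimensional scale invariance \(\int_{\R^2}|\nabla v_\tau|^2 = \int_{\R^2}|\nabla v|^2\), I obtain \(\mathcal I(v_\tau) = \frac12\int_{\R^2}|\nabla v|^2 + g(\tau)\) with \(g(\tau) = \frac{A}2\tau^2 - \frac{A}{2+\alpha}\tau^{2+\alpha}\) and \(A := \int_{\R^2}|v|^2\). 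A direct computation gives \(g'(\tau) = A\tau(1-\tau^\alpha)\), so \(g\) increases strictly on \((0,1)\), decreases strictly on \((1,\infty)\) with a strict maximum at \(\tau = 1\), and \(\mathcal I(v_\tau) \to -\infty\) as \(\tau \to \infty\). Thus \(\tau \mapsto v_\tau\) already yields a deformation with \(\mathcal I(v_\tau) < \mathcal I(v)\) for \(\tau \neq 1\) and negative energy for large \(\tau\).

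The obstacle, as noted in the excerpt, is that \(\tau \mapsto v_\tau\) cannot be continued down to the constant path \(0\): in dimension two \(\int_{\R^2}|\nabla v_\tau|^2\) is independent of \(\tau\), so \(v_\tau \not\to 0\) in \(H^1(\R^2)\) as \(\tau \to 0^+\). To reach \(0\) I will prepend an amplitude rescaling. Fix a small \(\tau_0 \in (0,1)\) and connect \(0\) to \(w := v_{\tau_0}\) by \(s \mapsto s\,w\), \(s \in [0,1]\); this map is clearly continuous into \(H^1(\R^2)\) and equals \(0\) at \(s=0\). Along it, using that the Riesz kernel \(I_\alpha\) is positive definite (so \(\int_{\R^2}(I_\alpha*F(sw))F(sw) \ge 0\)), I get
$$
\mathcal I(sw) \le \frac{s^2}2\int_{\R^2}\bigl(|\nabla w|^2 + |w|^2\bigr) \le \frac12\int_{\R^2}|\nabla v|^2 + \frac{\tau_0^2}2\int_{\R^2}|v|^2.
$$
On the other hand, the Poho\v zaev identity gives \(\mathcal I(v) = \frac12\int_{\R^2}|\nabla v|^2 + \frac{\alpha}{2(2+\alpha)}\int_{\R^2}|v|^2\), which is \emph{strictly} larger than \(\frac12\int_{\R^2}|\nabla v|^2\) because \(v \neq 0\). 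Hence, choosing \(\tau_0\) small enough, the right-hand side above is strictly below \(\mathcal I(v)\), so that \(\mathcal I(sw) < \mathcal I(v)\) for all \(s \in [0,1]\).

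It remains to assemble the pieces and reparametrise. On \([0,\tfrac14]\) I take the amplitude path \(s\mapsto s\,v_{\tau_0}\) (with \(s = 4t\)), on \([\tfrac14,\tfrac12]\) the dilation path \(\tau \mapsto v_\tau\) with \(\tau\) running continuously from \(\tau_0\) to \(1\), and on \([\tfrac12,1]\) the dilation path with \(\tau\) running from \(1\) to a large \(T\). The three pieces match at the gluing points (\(v_{\tau_0}\) at \(t=\tfrac14\) and \(v = v_1\) at \(t=\tfrac12\)), and since dilation acts strongly continuously on \(H^1(\R^2)\) for \(\tau\) in compact subsets of \((0,\infty)\) — we never let \(\tau\) reach \(0\) — the resulting \(\gamma_v\) is continuous and satisfies (a) and (b) by construction. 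Property (c) follows by combining the two strict inequalities established above, \(\mathcal I < \mathcal I(v)\) on the amplitude segment and \(g(\tau) < g(1)\) for \(\tau \neq 1\) on the two dilation segments, the only equality occurring at \(\tau = 1\), i.e. at \(t = \tfrac12\). Finally (d) holds by taking \(T\) large, since \(\mathcal I(v_T) \to -\infty\). The delicate point of the whole argument is precisely the left endpoint: it is the positivity of the nonlocal quadratic form together with the strict gap \(\mathcal I(v) > \frac12\int_{\R^2}|\nabla v|^2\) furnished by Poho\v zaev that allows the amplitude correction to be inserted without raising the energy up to the level \(\mathcal I(v)\).
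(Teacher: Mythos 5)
Your proof is correct, and the path itself is the same as the paper's: an amplitude segment $s\mapsto s\,v(\cdot/\tau_0)$, $s\in[0,1]$ (the paper writes it as $\tau\mapsto\frac{\tau}{\tau_0}v(\cdot/\tau_0)$ for $\tau\le\tau_0$), glued to the dilation segment $\tau\mapsto v(\cdot/\tau)$, on which the Poho\v zaev identity (Proposition~\ref{poho}) reduces $\mathcal{I}$ to the explicit function $\frac12\int_{\R^2}|\nabla v|^2+\bigl(\frac{\tau^2}{2}-\frac{\tau^{2+\alpha}}{2+\alpha}\bigr)\int_{\R^2}|v|^2$ with strict maximum at $\tau=1$; your treatment of that part, of the gluing, of the continuity of dilations away from $\tau=0$, and of the final reparametrisation coincides with the paper's. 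The genuine difference is how the energy is kept strictly below $\mathcal{I}(v)$ on the amplitude segment. The paper bounds the modulus of the nonlocal term via the Hardy--Littlewood--Sobolev inequality (Proposition~\ref{hls}) and then shows, using $(F_1)$, \eqref{f} and the Moser--Trudinger inequality (Proposition~\ref{mt}), that $\int_{\R^2}|F(\Tilde{\gamma}(\tau))|^{4/(2+\alpha)}\le C\tau_0^2\int_{\R^2}|v|^2$, so that the nonlocal contribution is $O(\tau_0^{2+\alpha})$. You instead discard the nonlocal term outright, invoking the positive semi-definiteness of the Riesz quadratic form, and win by the strict Poho\v zaev gap $\mathcal{I}(v)-\frac12\int_{\R^2}|\nabla v|^2=\frac{\alpha}{2(2+\alpha)}\int_{\R^2}|v|^2>0$; this is shorter, bypasses Moser--Trudinger and Hardy--Littlewood--Sobolev entirely on that segment, and yields an explicit, $v$-independent admissible choice $\tau_0^2<\frac{\alpha}{2+\alpha}$. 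The one point you assert without justification, and which is not among the paper's stated preliminaries, is the positivity $\int_{\R^2}(I_\alpha*f)f\ge0$ for $f=F(sw)\in L^{4/(2+\alpha)}(\R^2)$. It is a standard fact, but you should prove or cite it: for instance, the semigroup property $I_\alpha=I_{\alpha/2}*I_{\alpha/2}$ of Riesz kernels gives $\int_{\R^2}(I_\alpha*f)f=\int_{\R^2}|I_{\alpha/2}*f|^2\ge0$, the right-hand side being finite because $I_{\alpha/2}*f\in L^2(\R^2)$ by the Hardy--Littlewood--Sobolev inequality applied with order $\alpha/2$ (see, e.g., \cite{ll}). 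With that line added, your argument is complete.
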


\begin{proof}
We consider the path $\Tilde{\gamma}:[0,+\infty)\to H^1(\R^2)$ given for each \(\tau \in [0, \infty)\) by
$$
(\Tilde{\gamma}(\tau))(x)
:=\begin{cases}
\frac{\tau}{\tau_0}v\bigl(\frac{x}{\tau_0}\bigr)&\text{if }\tau\le\tau_0,\\
v\bigl(\frac{x}\tau\bigr)&\text{if }\tau\ge\tau_0.   
\end{cases}
$$
with $\tau_0\ll1$ to be chosen later. The function $\Tilde{\gamma}$ is clearly continuous on the interval $[0,+\infty)$ and in particular at its boundary $0$. 
For $\tau\ge\tau_0$, Proposition~\ref{poho} gives
\begin{equation}
\begin{split}
\mathcal{I}(\Tilde{\gamma}(\tau))&=\frac{1}{2} \int_{\R^2}|\nabla v|^2+\frac{\tau^2}2\int_{\R^2}|v|^2-\frac{\tau^{2+\alpha}}2\int_{\R^2}(I_\alpha*F(v))F(v)\\
&=\frac{1}{2} \int_{\R^2}|\nabla v|^2+\left(\frac{\tau^2}2-\frac{\tau^{2+\alpha}}{2+\alpha}\right)\int_{\R^2}|v|^2,
\end{split}
\end{equation}
which attains its strict maximum in $\tau=1$ and is negative for $\tau\ge\tau_1$, for some $\tau_1\gg1$.
For $\tau\le\tau_0$ we use \eqref{f} with $\ts\t=\left(1+\frac{\alpha}2\right)\pi$ and then apply Proposition~\ref{mt} to the function $\Tilde{\gamma}(\tau) / \left(\int_{\R^2}|\n\Tilde{\gamma}(\tau)|^2\right)^{{1}/2}$:
\begin{equation}
\label{eqFgTau}
\int_{\R^2}|F(\Tilde{\gamma}(\tau))|^\frac{4}{2+\alpha}\le C\int_{\R^2}\min\l\{1,|\Tilde{\gamma}(\tau)|^2\r\}e^{2\pi|\Tilde{\gamma}(\tau)|^2}\le C\frac{\int_{\R^2}|\Tilde{\gamma}(\tau)|^2}{\int_{\R^2}|\n\Tilde{\gamma}(\tau)|^2}=C\tau_0^2\int_{\R^2}|v|^2, 
\end{equation}
therefore, because of the Poho\v zaev identity (Proposition~\ref{poho}) and the Hardy--Littlewood--Sobolev inequality (Proposition~\ref{hls}), we have 
\[
\begin{split}
\mathcal{I}(\Tilde{\gamma}(\tau))&=\frac{\tau^2}{2\tau_0^2}\int_{\R^2}|\nabla v|^2+\frac{\tau^2}2\int_{\R^2}|v|^2-\int_{\R^2}(I_\alpha*F(\Tilde{\gamma}(\tau)))F(\Tilde{\gamma}(\tau))\\
&\le\frac{1}{2} \int_{\R^2}|\nabla v|^2+\frac{\tau^2}2\int_{\R^2}|v|^2+C\left(\int_{\R^2}|F(\Tilde{\gamma}(\tau))|^\frac{4}{2+\alpha}\right)^{1+\frac{\alpha}2}.
\end{split}
\]
Therefore, in view of \eqref{eqFgTau} and the Poho\v zaev identity again, we deduce that 
\[\begin{split}
\mathcal{I}(\Tilde{\gamma}(\tau))
&\le\frac{1}{2} \int_{\R^2}|\nabla v|^2+\frac{\tau_0^2}2\int_{\R^2}|v|^2+C\tau_0^{2+\alpha}\left(\int_{\R^2}|v|^2\right)^{1+\frac{\alpha}2}\\
&=\mathcal{I}(v)+\left(\frac{\tau_0^2}2-\frac{\alpha}{2(2+\a)}\right)\int_{\R^2}|v|^2+C\tau_0^{2+\alpha}\left(\int_{\R^2}|v|^2\right)^{1+\frac{\alpha}2},
\end{split} 
\]
which is strictly less than $\mathcal{I}(v)$ if $\tau_0=\tau_0(v)$ is chosen small enough. Therefore, the function $\Tilde{\gamma}$ verifies the following properties:
\begin{compactenum}[(a\cprime)]
\item $\Tilde{\gamma}(0)=0$;
\item $\Tilde{\gamma}(1)=v$;
\item $\mathcal{I}(\Tilde{\gamma}(\tau))<\mathcal{I}(v)$ for every $t\in[0,\tau_1]\setminus \{1\}$;
\item $\mathcal{I}(\Tilde{\gamma}(\tau_1))<0$.
\end{compactenum}
To get the required $\gamma_v$ it suffices to take a suitable change of variable $\gamma_v(t):=\Tilde{\gamma}(T(\tau))$ for some function $T \in C ([0, 1], \R)$ satisfying $T(0)=0$, $T(1)=1/2$ and $T(\tau_1)=1$.
\end{proof}

We are now in position to prove the main theorem of this work.

\begin{proof}[Proof of Theorem~\ref{main}]
Let $(u_n)_{n\in\N}$ be the Poho\v zaev--Palais--Smale sequence given by Proposition~\ref{pps}. Then, by Proposition~\ref{conv}, it converges weakly to a solution $u\in H^1(\R^2)\setminus \{0\}$ of \eqref{p}. By definition of groundstate, $\mathcal{I}(u)\ge c$ and, by Proposition~\ref{poho}, we have $\mathcal{P}(u)=0$ (Proposition~\ref{poho} is applicable in view of Proposition~\ref{reg}).
Arguing as in \cite{mv15}*{Theorem 1}, we get successively
\begin{equation}
\begin{split}
\mathcal{I}(u)&=\frac{1}{2} \int_{\R^2}|\nabla u|^2+\frac{\alpha}{2(2+\a)}\int_{\R^2}|u|^2\\
&\le\liminf_{n \to \infty}\left(\frac{1}{2} \int_{\R^2}|\nabla u_n|^2+\frac{\alpha}{2(2+\a)}\int_{\R^2}|u_n|^2\right)=\liminf_{n \to \infty}\left(\mathcal{I}(u_n)-\frac{\mathcal{P}(u_n)}{2+\alpha}\right) =b.
\end{split}
\end{equation}
If $v\in H^1(\R^2)\setminus \{0\}$ is another solution of the Choquard equation \eqref{p}, we apply Proposition~\ref{path} to $v$:
$$\mathcal{I}(v)=\sup_{t\in[0,1]}\mathcal{I}(\gamma_v(t))\ge\inf_{\gamma\in\G}\sup_{t\in[0,1]}\mathcal{I}(\gamma(t))=b.$$
The solution $v$ being arbitrary, by definition of groundstate one has $b\le c$. Putting everything together, we get
$$c\le\mathcal{I}(u)\le b\le c,$$
hence $\mathcal{I}(u)=b=c$. The proof is complete.
\end{proof}

We point out as a corollary of the proof of Theorem~\ref{main}, that the convergence in Proposition~\ref{conv} turns out to be actually a strong convergence in $H^1(\R^2)$ and that this 
gives as a byproduct a compactness property of the set of groundstates of \eqref{p}.

\begin{corollary}
Let $(u_n)_{n\in\N}$ be a Poho\v zaev--Palais--Smale sequence satisfying the assumptions of Proposition~\ref{conv} and in addition
\[
\lim_{n \to \infty}\mathcal{I}(u_n)= c.
\]
Then, there exists $u\in H^1(\R^2)\setminus \{0\}$ solving \eqref{p} and a sequence $(x_n)_{n\in\N}$ in $\R^2$ such that, up to subsequences, $u_n(\cd-x_n)\to_{n \to \infty}u$ strongly in $H^1(\R^2)$.\\
Moreover, the set of groundstates
$$\mcal S_c:=\bigl\{u\in H^1(\R^2);\,u\text{ solves }\eqref{p}\text{ and }\mathcal{I}(u)=c\bigr\}$$
is compact, up to translations, in $H^1(\R^2)$.
\end{corollary}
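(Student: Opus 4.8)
The plan is to recognize the given sequence $(u_n)_{n\in\N}$ as a Poho\v zaev--Palais--Smale sequence to which Proposition~\ref{conv} applies, and then to promote the weak convergence it produces to strong convergence by a norm-matching argument. First I would rule out the vanishing alternative of Proposition~\ref{conv}: if $u_n \to 0$ strongly in $H^1(\R^2)$, then by continuity of $\mathcal{I}$ one would have $\mathcal{I}(u_n) \to \mathcal{I}(0) = 0$, contradicting $\mathcal{I}(u_n) \to c = b > 0$ (the value $c=b$ is identified in Theorem~\ref{main} and the positivity comes from Lemma~\ref{bb}). Hence the second alternative holds, so there is a sequence $(x_n)_{n\in\N}$ in $\R^2$ with $u_n(\cdot - x_n) \wk u$ weakly in $H^1(\R^2)$ for some nontrivial solution $u$ of \eqref{p}; after translating and relabelling I may assume $x_n = 0$ when analysing the limit.

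The heart of the matter is the upgrade to strong convergence, and this is the step I expect to be the main obstacle. Since $u$ solves \eqref{p}, Proposition~\ref{reg} gives $u \in W^{2,2}_{\mathrm{loc}}(\R^2)$, whence Proposition~\ref{poho} yields $\mathcal{P}(u) = 0$; inserting this into the algebraic identity
\[
\mathcal{I}(w) - \frac{\mathcal{P}(w)}{2+\alpha} = \frac{1}{2}\int_{\R^2}|\nabla w|^2 + \frac{\alpha}{2(2+\alpha)}\int_{\R^2}|w|^2
\]
shows that $\mathcal{I}(u) = \frac{1}{2}\int_{\R^2}|\nabla u|^2 + \frac{\alpha}{2(2+\alpha)}\int_{\R^2}|u|^2$. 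Applying the same identity to $u_n$ and using the hypotheses $\mathcal{I}(u_n) \to c$ and $\mathcal{P}(u_n) \to 0$, I obtain that $\frac{1}{2}\int_{\R^2}|\nabla u_n|^2 + \frac{\alpha}{2(2+\alpha)}\int_{\R^2}|u_n|^2 \to c$. On one hand $\mathcal{I}(u) \ge c$ since $u$ is a nontrivial solution and $c$ is the infimum over such solutions; on the other hand weak lower semicontinuity of the (positive definite) quadratic form forces $\mathcal{I}(u) \le c$. Therefore $\mathcal{I}(u) = c$ and the quadratic quantities actually converge. As $w \mapsto \frac{1}{2}\int_{\R^2}|\nabla w|^2 + \frac{\alpha}{2(2+\alpha)}\int_{\R^2}|w|^2$ is an equivalent Hilbert norm on $H^1(\R^2)$, convergence of norms together with weak convergence yields strong convergence $u_n(\cdot - x_n) \to u$ in $H^1(\R^2)$. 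The whole upgrade rests crucially on the validity of the Poho\v zaev identity for the limit $u$.

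For the compactness of $\mathcal{S}_c$ I would observe that any sequence $(v_n)_{n\in\N}$ of groundstates already satisfies the hypotheses of the corollary: $\mathcal{I}(v_n) = c$ is bounded and converges to $c$, $\mathcal{I}'(v_n) = 0$ because each $v_n$ is an exact critical point, and $\mathcal{P}(v_n) = 0$ by Propositions~\ref{reg} and~\ref{poho}. The first part of the corollary then applies, so after translating and passing to a subsequence $v_n(\cdot - x_n) \to v$ strongly in $H^1(\R^2)$ for some nontrivial solution $v$ of \eqref{p}. By continuity and translation invariance of $\mathcal{I}$ one has $\mathcal{I}(v) = \lim_{n \to \infty} \mathcal{I}(v_n) = c$, so $v \in \mathcal{S}_c$. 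This is exactly the asserted compactness of $\mathcal{S}_c$ up to translations.
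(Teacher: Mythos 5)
Your proposal is correct and follows essentially the same route as the paper: exclude the vanishing alternative of Proposition~\ref{conv} via $\mathcal{I}(u_n)\to c=b>0$, use the identity $\mathcal{I}(w)-\mathcal{P}(w)/(2+\alpha)=\tfrac12\int_{\R^2}|\nabla w|^2+\tfrac{\alpha}{2(2+\alpha)}\int_{\R^2}|w|^2$ together with Propositions~\ref{reg} and~\ref{poho} and weak lower semicontinuity to match norms and upgrade to strong convergence, and then feed an arbitrary sequence of groundstates back into the first part to get compactness of $\mathcal{S}_c$. No gaps; if anything, you make explicit the facts ($c=b>0$, $\mathcal{P}(u)=0$ for the weak limit) that the paper invokes only implicitly by referring back to the proof of Theorem~\ref{main}.
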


\begin{proof}
We apply Proposition~\ref{conv}; the first alternative is excluded by our assumption and the continuity of the functional \(\mathcal{I}\) at \(0\). 
Therefore we get, up to translations, $u_n\wk u$ as \(n \to \infty\) in \(H^1 (\mathbb{R}^2)\) and the function $u \in H^1 (\mathbb{R}^2) \setminus \{0\}$ solves \eqref{p}.
As in the proof of Theorem~\ref{main}, we get
\begin{equation}
\liminf_{n \to \infty}\left(\frac{1}{2} \int_{\R^2}|\nabla u_n|^2+\frac{\alpha}{2(2+\a)}\int_{\R^2}|u_n|^2\right)\\
\le c =\frac{1}{2} \int_{\R^2}|\nabla u|^2+\frac{\alpha}{2(2+\a)}\int_{\R^2}|u|^2,
\end{equation}
from which it follows that  $u_n\to u$ strongly in \(H^1 (\R^2)\) as \(n \to \infty\).

To show the compactness of the set of groundstates $\mcal S_c$, we consider an arbitrary sequence $(u_n)_{n\in\N}$ in $\mcal S_c$. Because of Proposition~\ref{poho}, it verifies $\mathcal{P}(u_n)=0$ for every $n \in \mathbb{N}$, so it satisfies the hypotheses of Proposition~\ref{conv} and of the first part of the present corollary; therefore, up to subsequences and translations it will converge to some $u$ which solves \eqref{p} and, by the continuity of the functional $\mathcal{I}$ in $H^1 (\R^2)$, we get $u\in\mcal S_c$.
\end{proof}

We conclude this paper by the following result on additional qualitative properties of the solution \(u\).

\begin{proposition}
If \(F\) is even and nondecreasing on \((0, \infty)\) and \(u\) is a groundstate solution of 
\eqref{p}, then \(u\) has constant sign and is radially symmetric with respect to some point \(a \in \R^N\).
\end{proposition}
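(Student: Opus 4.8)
The plan is to follow the argument of \cite{mv15}, adapting its rearrangement steps to the planar situation. Write \(D(v):=\int_{\R^2}\bigl(I_\alpha*F(v)\bigr)F(v)\), so that \(\mathcal{I}(v)=\tfrac12\int_{\R^2}(|\nabla v|^2+|v|^2)-\tfrac12 D(v)\) and \(\mathcal{P}(v)=\int_{\R^2}|v|^2-(1+\tfrac\alpha2)D(v)\). First I would record the consequences of the hypotheses on \(F\): condition \((F_2)\) forces \(F(0)=0\), and since \(F\) is even and nondecreasing on \((0,\infty)\) we get \(F\ge 0\) on \(\R\), while \(F'\) is odd and satisfies \(sF'(s)\ge 0\) and \(|F'(s)|=F'(|s|)\) for every \(s\in\R\); in particular \(I_\alpha*F(v)\ge 0\). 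Because \(F\) is even and \(|\nabla |v||=|\nabla v|\) almost everywhere, replacing \(v\) by \(|v|\) leaves \(\int_{\R^2}|\nabla v|^2\), \(\int_{\R^2}|v|^2\) and \(D(v)\) unchanged; hence \(\mathcal{I}(|u|)=\mathcal{I}(u)\) and \(\mathcal{P}(|u|)=\mathcal{P}(u)=0\). I would also isolate a consequence of Proposition~\ref{path}: its energy computation uses only the identity \(\mathcal{P}(v)=0\), so for every \(v\in H^1(\R^2)\setminus\{0\}\) with \(\mathcal{P}(v)=0\) one obtains a path in \(\Gamma\) whose maximal energy equals \(\mathcal{I}(v)\), and therefore \(\mathcal{I}(v)\ge b=c\).

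For the constant sign I would show that \(|u|\) is itself a groundstate solution and then apply the strong maximum principle. By the observation above \(\mathcal{I}(|u|)\ge c\), while \(\mathcal{I}(|u|)=\mathcal{I}(u)=c\); hence \(|u|\) minimises \(\mathcal{I}\) over the Poho\v zaev set \(\{\,v\in H^1(\R^2)\setminus\{0\}:\mathcal{P}(v)=0\,\}\). A constrained minimiser satisfies \(\mathcal{I}'(|u|)=\lambda\,\mathcal{P}'(|u|)\) for some \(\lambda\in\R\); evaluating both sides on \(\xi:=\frac{d}{d\tau}\bigl(|u|(\cdot/\tau)\bigr)\big|_{\tau=1}\) gives \(\mathcal{I}'(|u|)[\xi]=\frac{d}{d\tau}\mathcal{I}\bigl(|u|(\cdot/\tau)\bigr)\big|_{\tau=1}=\mathcal{P}(|u|)=0\) on the one hand and \(\mathcal{P}'(|u|)[\xi]=-\alpha(1+\tfrac\alpha2)D(|u|)\neq 0\) on the other, whence \(\lambda=0\) and \(|u|\) solves \eqref{p}. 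By Proposition~\ref{reg} it lies in \(W^{2,p}_{\mathrm{loc}}(\R^2)\) and is continuous, and since its right-hand side \((I_\alpha*F(|u|))F'(|u|)\) is nonnegative we have \(-\Delta|u|+|u|\ge 0\); the strong maximum principle then yields \(|u|>0\) on \(\R^2\). Consequently \(u\) never vanishes, and being continuous on the connected set \(\R^2\) it has constant sign.

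For the radial symmetry I would use polarization, as in \cite{mv15}. Replacing \(u\) by \(-u\) if necessary (legitimate because \(F\) is even), we may assume \(u>0\). For a closed affine half-space \(H\) let \(u^H\) denote the polarization of \(u\) with respect to \(H\). Polarization preserves \(\int_{\R^2}|\nabla u|^2\) and \(\int_{\R^2}|u|^2\), and since \(F\) is nondecreasing one has \(F(u)^H=F(u^H)\); the strict radial monotonicity of the Riesz kernel \(I_\alpha\) then gives \(D(u^H)\ge D(u)\), with equality if and only if \(u^H=u\) or \(u^H\) is the reflection of \(u\) across \(\partial H\). In particular \(\mathcal{P}(u^H)\le 0\). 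Dilating, \(\tau\mapsto u^H(\cdot/\tau)\) leaves \(\int_{\R^2}|\nabla u^H|^2\) fixed and meets the constraint \(\{\mathcal{P}=0\}\) at a unique \(\tau_*\in(0,1]\), where a short computation using \(\int_{\R^2}|\nabla u^H|^2=\int_{\R^2}|\nabla u|^2\) gives \(\mathcal{I}(u^H(\cdot/\tau_*))=\tfrac12\int_{\R^2}|\nabla u|^2+\tfrac{\alpha\tau_*^2}{2(2+\alpha)}\int_{\R^2}|u|^2\le \mathcal{I}(u)=c\), with equality exactly when \(\tau_*=1\), i.e. when \(D(u^H)=D(u)\). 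Since \(\mathcal{P}(u^H(\cdot/\tau_*))=0\) forces \(\mathcal{I}(u^H(\cdot/\tau_*))\ge c\) by the first paragraph, equality must hold, so \(D(u^H)=D(u)\) for every \(H\). Thus \(u\) coincides with its own polarization or with its reflection for every half-space, and the characterisation of functions invariant under all polarizations shows that \(u\) is radially symmetric about some point \(a\in\R^2\).

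The main obstacle is the second step: upgrading \(|u|\) from a minimiser on the Poho\v zaev set to a genuine solution of \eqref{p}, that is, verifying that the Lagrange multiplier vanishes. This is more delicate than in dimension \(N\ge 3\) because the Dirichlet integral is scale-invariant in the plane, so the dilation family must be treated exactly as in the Jeanjean scaling of Proposition~\ref{pps}; only near \(\tau=1\) is this family well behaved, the discontinuity at \(\tau=0\) being harmless here. The other delicate point is the rigidity analysis of the nonlocal term \(D\) under polarization, which relies on the strict monotonicity of \(I_\alpha\); this is by now standard and transfers directly from \cite{mv15}.
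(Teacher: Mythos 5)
Your route is genuinely different from the paper's and its core idea is sound. The paper proves that $|u|$ (and later the polarization $u^H$) is itself a groundstate \emph{solution} by a mountain-pass argument: the path $t\mapsto|\gamma_u(t)|$ built from Proposition~\ref{path} lies in $\Gamma$ and attains its maximum $b$ only at $|u|$, and a deformation argument then forces $|u|$ to be a critical point; the same machinery is invoked again for $u^H$. You instead observe that the proof of Proposition~\ref{path} uses the hypothesis ``$v$ solves \eqref{p}'' only through the identity $\mathcal{P}(v)=0$, so that $\mathcal{I}\ge b=c$ on the whole set $\{v\in H^1(\R^2)\setminus\{0\}\st \mathcal{P}(v)=0\}$, and you then treat $|u|$ as a constrained minimizer on this Poho\v zaev set. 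That observation is correct, and it buys a real simplification in the symmetry step: rescaling $u^H$ back onto $\{\mathcal{P}=0\}$ and comparing energies yields $\int_{\R^2}\bigl(I_\alpha*F(u^H)\bigr)F(u^H)=\int_{\R^2}\bigl(I_\alpha*F(u)\bigr)F(u)$ directly, without ever having to show that $u^H$ solves \eqref{p}, which the paper does need.

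There is, however, a genuine flaw in your constant-sign step as written. The direction $\xi=\frac{d}{d\tau}\bigl(|u|(\cdot/\tau)\bigr)\big|_{\tau=1}=-x\cdot\nabla|u|$ need not belong to $H^1(\R^2)$: for a general $H^1$ function, the dilation curve $\tau\mapsto w(\cdot/\tau)$ is continuous but \emph{not differentiable} in $H^1$ at any $\tau$, including $\tau=1$ (the obstruction is not the endpoint $\tau=0$, as your closing remark suggests, but the possible failure of $x\cdot\nabla w\in L^2$); this is precisely why Proposition~\ref{pps} works with Jeanjean's map $\Phi$ rather than differentiating dilations. Consequently ``evaluating $\mathcal{I}'(|u|)=\lambda\,\mathcal{P}'(|u|)$ on $\xi$'' is not meaningful, and worse, the constraint qualification $\mathcal{P}'(|u|)\ne0$ required by the Lagrange rule is itself only verified against this inadmissible direction. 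The repair is the one you gesture at, and it must be carried out explicitly: apply the Lagrange multiplier rule to $\tilde{\mathcal{I}}=\mathcal{I}\circ\Phi$ and $\tilde{\mathcal{P}}=\mathcal{P}\circ\Phi$ on $\R\times H^1(\R^2)$, which are of class $C^1$ with $\tilde{\mathcal{P}}(\sigma,v)=e^{2\sigma}\int_{\R^2}|v|^2-\bigl(1+\frac{\alpha}{2}\bigr)e^{(2+\alpha)\sigma}\int_{\R^2}\bigl(I_\alpha*F(v)\bigr)F(v)$. The point $(0,|u|)$ minimizes $\tilde{\mathcal{I}}$ over $\{\tilde{\mathcal{P}}=0,\ v\ne0\}$; since $\partial_\sigma\tilde{\mathcal{P}}(0,|u|)=-\alpha\bigl(1+\frac{\alpha}{2}\bigr)\int_{\R^2}\bigl(I_\alpha*F(|u|)\bigr)F(|u|)\ne0$, a multiplier $\lambda$ exists, and the $\sigma$-component of $\tilde{\mathcal{I}}'(0,|u|)=\lambda\,\tilde{\mathcal{P}}'(0,|u|)$ reads $0=\mathcal{P}(|u|)=-\lambda\,\alpha\bigl(1+\frac{\alpha}{2}\bigr)\int_{\R^2}\bigl(I_\alpha*F(|u|)\bigr)F(|u|)$, whence $\lambda=0$ and then $\mathcal{I}'(|u|)=0$ from the $v$-component. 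A smaller caveat: the equality case of the polarization inequality gives rigidity at the level of $F\circ u$, namely $F(u^H)=F(u)$ or $F(u^H)=F(u\circ\sigma_H)$, not directly $u^H=u$ or $u^H=u\circ\sigma_H$; passing to the latter needs an extra argument as in \cite{mv13} (a step the paper's own sketch also glosses over).
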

\begin{proof}
The proof is the same as \cite{mv15}*{Propositions 5.2 and 5.3}. We briefly sketch the argument for the convenience of the reader.

To prove the constant-sign property, consider the path $\gamma_u$ defined in Proposition~\ref{path}. Since $F$ is an even function, $\mathcal{I}(|v|)=\mathcal{I}(v)$ for every $v\in H^1(\R^2)$, hence $\ts\mathcal{I}(|\gamma_u(t)|)<\mathcal{I}\left(\l|\gamma_u (1 /2)\r|\right)=b$ for every $\ts t\in[0,1]\sm \{1/2\}$. From this, one easily deduces that the function $|u|$ is a groundstate solution of \eqref{p}; since $F'\ge0$, we can apply the strong maximum principle and get $|u|>0$, namely $u$ has constant sign. Without loss of generality we assume now that \(u \ge 0\).

For the symmetry, we follow the strategy of Bartsch, Weth and Willem \cite{bww} and its adaptation to the Choquard equation \cites{mv15,mv13}. For any closed half space $H\sub\R^2$ we consider the reflection $\sigma_H$ with respect to $H$ and define, for every $u\in H^1(\R^2)$, the polarization (see for example \cite{bs})
\[
  u^H(x):=
  \begin{cases}
    \max\{u(x),u(\sigma_H(x))\}&\text{if }x\in H,\\
    \min\{u(x),u(\sigma_H(x))\}&\text{if }x\not \in H.
  \end{cases} 
\]
We first observe that \cite{bs}*{lemma 5.3}
\[
 \int_{\R^2} \abs{\nabla u^H}^2 + \abs{u^H}^2 
 = \int_{\R^2} \abs{\nabla u}^2 + \abs{u}^2
\]
Moreover, since \(F\) is nondecreasing on \((0, + \infty)\), we have \((F \circ u)^H = F \circ (u^H)\) and thus in view of the rearrangement inequality for the Riesz potential
\[
 \int_{\R^2} \bigl(I_\alpha \ast F(u^H) \bigr) F (u^H)
 =  \int_{\R^2} \bigl(I_\alpha \ast F(u)^H \bigr) F (u)^H
 \le \int_{\R^2} \bigl(I_\alpha \ast F(u) \bigr) F (u)
\]
with equality if and only if either \((F \circ u)^H = F \circ u\) or \((F \circ u)^H = F \circ u \circ \sigma_H\) \cite{mv13}*{lemma 5.3}.
It follows thus that, $\mathcal{I}(u^H)\le\mathcal{I}(u)$, with equality holding if and only if either $F(u^H)= F(u)$ or $F(u^H)= F(u \circ \sigma_H)$ on \(\R^2\).
From this and the definition of the level $b$, it follows that $u^H$ is a ground state solutions of \eqref{p}, hence either $F(u^H)= F(u)$ or $F(u^H)= F(u \circ \sigma_H)$ on \(\R^2\). In the former case we easily get $f(u^H)=f(u)$, hence $u^H=u$; in the latter, we similarly get $u^H=u \circ \sigma_H$. The hyperplane $H$ being arbitrary, in either case we conclude that the function $u$ is radially symmetric with respect to some point \(a \in \R^2\) \citelist{\cite{mv13}*{lemma 5.4}\cite{vsw08}*{proposition 3.15}}.
\end{proof}

\begin{bibdiv}
\begin{biblist}

\bib{at}{article}{
      author={Adachi, Shinji},
      author={Tanaka, Kazunaga},
       title={Trudinger type inequalities in {$\mathbb{R}^N$} and their best
  exponents},
        date={2000},
        ISSN={0002-9939},
     journal={Proc. Amer. Math. Soc.},
      volume={128},
      number={7},
       pages={2051\ndash 2057},
         url={http://dx.doi.org/10.1090/S0002-9939-99-05180-1},
}

\bib{bww}{article}{
      author={Bartsch, Thomas},
      author={Weth, Tobias},
      author={Willem, Michel},
       title={Partial symmetry of least energy nodal solutions to some
  variational problems},
        date={2005},
        ISSN={0021-7670},
     journal={J. Anal. Math.},
      volume={96},
       pages={1\ndash 18},
}

\bib{bl}{article}{
      author={Berestycki, H.},
      author={Lions, P.-L.},
       title={Nonlinear scalar field equations. {II}. {E}xistence of infinitely
  many solutions},
        date={1983},
        ISSN={0003-9527},
     journal={Arch. Rational Mech. Anal.},
      volume={82},
      number={4},
       pages={347\ndash 375},
         url={http://dx.doi.org/10.1007/BF00250556},
}

\bib{bgk}{article}{
      author={Berestycki, Henri},
      author={Gallou{\"e}t, Thierry},
      author={Kavian, Otared},
       title={\'{E}quations de champs scalaires euclidiens non lin\'eaires dans
  le plan},
        date={1983},
        ISSN={0249-6291},
     journal={C. R. Acad. Sci. Paris S\'er. I Math.},
      volume={297},
      number={5},
       pages={307\ndash 310},
}

\bib{bs}{article}{
      author={Brock, Friedemann},
      author={Solynin, Alexander~Yu.},
       title={An approach to symmetrization via polarization},
        date={2000},
        ISSN={0002-9947},
     journal={Trans. Amer. Math. Soc.},
      volume={352},
      number={4},
       pages={1759\ndash 1796},
}

\bib{csv}{article}{
      author={Choquard, Philippe},
      author={Stubbe, Joachim},
      author={Vuffray, Marc},
       title={Stationary solutions of the {S}chr\"odinger-{N}ewton model---an
  {ODE} approach},
        date={2008},
        ISSN={0893-4983},
     journal={Differential Integral Equations},
      volume={21},
      number={7--8},
       pages={665\ndash 679},
}

\bib{gt}{book}{
      author={Gilbarg, David},
      author={Trudinger, Neil~S.},
       title={Elliptic partial differential equations of second order},
     edition={Second},
      series={Grundlehren der Mathematischen Wissenschaften},
   publisher={Springer, Berlin},
        date={1983},
      volume={224},
        ISBN={3-540-13025-X},
}

\bib{hit}{article}{
      author={Hirata, Jun},
      author={Ikoma, Norihisa},
      author={Tanaka, Kazunaga},
       title={Nonlinear scalar field equations in {$\mathbb{R}^N$}: mountain
  pass and symmetric mountain pass approaches},
        date={2010},
        ISSN={1230-3429},
     journal={Topol. Methods Nonlinear Anal.},
      volume={35},
      number={2},
       pages={253\ndash 276},
}

\bib{jea}{article}{
      author={Jeanjean, Louis},
       title={Existence of solutions with prescribed norm for semilinear
  elliptic equations},
        date={1997},
        ISSN={0362-546X},
     journal={Nonlinear Anal.},
      volume={28},
      number={10},
       pages={1633\ndash 1659},
         url={http://dx.doi.org/10.1016/S0362-546X(96)00021-1},
}

\bib{jt}{article}{
      author={Jeanjean, Louis},
      author={Tanaka, Kazunaga},
       title={A remark on least energy solutions in {${\bf R}^N$}},
        date={2003},
        ISSN={0002-9939},
     journal={Proc. Amer. Math. Soc.},
      volume={131},
      number={8},
       pages={2399\ndash 2408},
         url={http://dx.doi.org/10.1090/S0002-9939-02-06821-1},
}

\bib{lie}{article}{
      author={Lieb, Elliott~H.},
       title={Existence and uniqueness of the minimizing solution of
  {C}hoquard's nonlinear equation},
        date={1976/77},
     journal={Studies in Appl. Math.},
      volume={57},
      number={2},
       pages={93\ndash 105},
}

\bib{ll}{book}{
      author={Lieb, Elliott~H.},
      author={Loss, Michael},
       title={Analysis},
     edition={2},
      series={Graduate Studies in Mathematics},
   publisher={American Mathematical Society}, 
   address={Providence, R.I.},
        date={2001},
      volume={14},
        ISBN={0-8218-2783-9},
         url={http://dx.doi.org/10.1090/gsm/014},
}

\bib{lio}{article}{
      author={Lions, P.-L.},
       title={The {C}hoquard equation and related questions},
        date={1980},
        ISSN={0362-546X},
     journal={Nonlinear Anal.},
      volume={4},
      number={6},
       pages={1063\ndash 1072},
         url={http://dx.doi.org/10.1016/0362-546X(80)90016-4},
}

\bib{men}{article}{
      author={Menzala, Gustavo~Perla},
       title={On the nonexistence of solutions for an elliptic problem in
  unbounded domains},
        date={1983},
        ISSN={0532-8721},
     journal={Funkcial. Ekvac.},
      volume={26},
      number={3},
       pages={231\ndash 235},
         url={http://www.math.kobe-u.ac.jp/~fe/xml/mr0748014.xml},
}

\bib{mpt}{article}{
      author={Moroz, Irene~M.},
      author={Penrose, Roger},
      author={Tod, Paul},
       title={Spherically-symmetric solutions of the {S}chr\"odinger-{N}ewton
  equations},
        date={1998},
        ISSN={0264-9381},
     journal={Classical Quantum Gravity},
      volume={15},
      number={9},
       pages={2733\ndash 2742},
         url={http://dx.doi.org/10.1088/0264-9381/15/9/019},
        note={Topology of the Universe Conference (Cleveland, OH, 1997)},
}

\bib{mv13}{article}{
      author={Moroz, Vitaly},
      author={Van~Schaftingen, Jean},
       title={Groundstates of nonlinear {C}hoquard equations: existence,
  qualitative properties and decay asymptotics},
        date={2013},
        ISSN={0022-1236},
     journal={J. Funct. Anal.},
      volume={265},
      number={2},
       pages={153\ndash 184},
         url={http://dx.doi.org/10.1016/j.jfa.2013.04.007},
}

\bib{mv15}{article}{
      author={Moroz, Vitaly},
      author={Van~Schaftingen, Jean},
       title={Existence of groundstates for a class of nonlinear {C}hoquard
  equations},
        date={2015},
        ISSN={0002-9947},
     journal={Trans. Amer. Math. Soc.},
      volume={367},
      number={9},
       pages={6557\ndash 6579},
         url={http://dx.doi.org/10.1090/S0002-9947-2014-06289-2},
}

\bib{mv17}{article}{
      author={Moroz, Vitaly},
      author={Van~Schaftingen, Jean},
      title={A guide to the Choquard equation}, 
      journal={J. Fixed Point Theory Appl.},
      date={2016},
      doi={10.1007/s11784-016-0373-1 },
}

\bib{mos}{article}{
      author={Moser, J.},
       title={A sharp form of an inequality by {N}.~{T}rudinger},
        date={1970/71},
        ISSN={0022-2518},
     journal={Indiana Univ. Math. J.},
      volume={20},
       pages={1077\ndash 1092},
}

\bib{pek}{book}{
      author={Pekar, S.I.},
       title={{Untersuchungen \"uber die Elektronentheorie der Kristalle}},
   publisher={Akademie Verlag, Berlin},
   date={1954},
}

\bib{tm}{article}{
      author={Tod, Paul},
      author={Moroz, Irene~M.},
       title={An analytical approach to the {S}chr\"odinger--{N}ewton
  equations},
        date={1999},
        ISSN={0951-7715},
     journal={Nonlinearity},
      volume={12},
      number={2},
       pages={201\ndash 216},
         url={http://dx.doi.org/10.1088/0951-7715/12/2/002},
}

\bib{vs14}{article}{
      author={Van~Schaftingen, Jean},
       title={Interpolation inequalities between {S}obolev and
  {M}orrey-{C}ampanato spaces: a common gateway to concentration-compactness
  and {G}agliardo-{N}irenberg interpolation inequalities},
        date={2014},
        ISSN={0032-5155},
     journal={Port. Math.},
      volume={71},
      number={3-4},
       pages={159\ndash 175},
}

\bib{vsw08}{article}{
   author={Van Schaftingen, Jean},
   author={Willem, Michel},
   title={Symmetry of solutions of semilinear elliptic problems},
   journal={J. Eur. Math. Soc. (JEMS)},
   volume={10},
   date={2008},
   number={2},
   pages={439--456},
   issn={1435-9855},
}

\bib{wil}{book}{
      author={Willem, Michel},
       title={Minimax theorems},
      series={Progress in Nonlinear Differential Equations and their
  Applications, 24},
   publisher={Birkh\"auser},
     address={Boston, Mass.},
        date={1996},
        ISBN={0-8176-3913-6},
         url={http://dx.doi.org/10.1007/978-1-4612-4146-1},
}

\end{biblist}
\end{bibdiv}

\end{document}